\theoremstyle{plain}
\newtheorem{thm}{Theorem}[section]
\newtheorem{lem}[thm]{Lemma}
\newtheorem{prop}[thm]{Proposition}
\newtheorem{cor}[thm]{Corollary}
\newtheorem{conj}[thm]{Conjecture}
\theoremstyle{definition}
\newtheorem{defi}[thm]{Definition}
\newtheorem{eg}[thm]{Example}
\newtheorem{question}[thm]{Question}
\theoremstyle{remark}
\newtheorem{rmk}[thm]{Remark}
\def\Z{{\mathbb Z}}
\def\Q{{\mathbb Q}}
\def\R{{\mathbb R}}
\def\C{{\mathbb C}}
\def\P{{\mathbb P}}
\def\cB{\mathcal{B}}
\def\L{\mathcal{L}}
\def\M{\mathcal{M}}
\def\O{\mathcal{O}}
\def\U{\mathcal{U}}
\def\X{\mathcal{X}}
\def\fa{\mathfrak{a}}
\def\fn{\mathfrak{n}}
\def\a{\alpha}
\def\g{\gamma}
\def\f{\phi}
\def\ep{\epsilon}
\def\l{\lambda}
\def\n{\nu}
\def\om{\omega}
\def\p{\pi}
\def\r{\rho}
\def\t{\tau}
\def\x{\xi}
\def\D{\Delta}
\def\G{\Gamma}
\def\Om{\Omega}
\def\.{\cdot}
\def\^{\widehat}
\def\~{\widetilde}
\def\ov{\overline}
\def\rat{\dashrightarrow}
\def\inj{\hookrightarrow}
\def\de{\partial}
\def\({\left(}
\def\){\right)}
\renewcommand{\and}{ \ \ \text{ and } \ \ }
\renewcommand{\for}{ \ \ \text{ for } \ \ }
\def\reg{\mathrm{reg}}
\def\Jac{\mathrm{Jac}}
\def\num{\mathrm{num}}
\def\st{\mathrm{st}}
\def\KR{\mathrm{KR}}
\DeclareMathOperator{\codim} {codim}
\DeclareMathOperator{\Gr} {Gr}
\DeclareMathOperator{\Sing} {Sing}
\DeclareMathOperator{\Ex} {Ex}
\DeclareMathOperator{\ord} {ord}
\DeclareMathOperator{\Supp} {Supp}
\DeclareMathOperator{\dirlim} {\varinjlim}
\DeclareMathOperator{\id} {id}
\DeclareMathOperator{\Fitt} {Fitt}
\DeclareMathOperator{\lSFT} {lSFT}
\DeclareMathOperator{\CZ} {CZ}
\DeclareMathOperator{\mld} {mld}
\DeclareMathOperator{\hmi} {hmi}
\DeclareMathOperator{\mi} {mi}
\title{Towards a link theoretic characterization of smoothness}
\author{Tommaso de Fernex}
\address{Department of Mathematics, University of Utah, Salt Lake City, UT 48112, USA}
\email{{\tt defernex@math.utah.edu}}
\author{Yu-Chao Tu}
\address{Department of Mathematics, University of Utah, Salt Lake City, UT 48112, USA}
\email{{\tt tu@math.utah.edu}}
\subjclass[2010]{Primary 14B05; Secondary 32S05, 32V05, 53D10}
\keywords{Link, contact structure, minimal log discrepancy, Nash blow-up, CR structure}
\thanks{The research of the first author was partially supported by NSF Grant DMS-1402907 and
NSF FRG Grant DMS-1265285.
The research of the second author was partially supported by NSF FRG Grant DMS-1265285.}
\thanks{Compiled on \today. Filename {\tt \jobname}}
\begin{document}

\begin{abstract}
A theorem of Mumford states that, on complex surfaces, any normal
isolated singularity whose link is diffeomorphic to a sphere is
actually a smooth point. While this property fails in higher
dimensions, McLean asks whether the contact structure that the link
inherits from its embedding in the variety may suffice to characterize
smooth points among normal isolated singularities. He proves that this
is the case in dimension 3. In this paper, we use techniques from birational geometry to
extend McLean's result to a large class of higher dimensional singularities. 
We also introduce a more refined invariant of the link using CR geometry,
and conjecture that this invariant is strong enough to characterize 
smoothness in full generality.
\end{abstract}

\maketitle

\section{Introduction}

Let $X \subset \C^N$ be an $n$-dimensional complex analytic variety
with an isolated singularity at the origin $0 \in \C^N$.
We do not exclude the possibility that $X$ is actually smooth at $0$.

For sufficiently small $ \ep > 0$,
the intersection of $X$ with the sphere of radius $\ep$ centered at $0$
is a differential manifold $L_X$ of real dimension $2n-1$, called the \emph{link}
of the singularity \cite{Mil68}.
The diffeomorphism class of the link is an analytic invariant of the singularity.
If $X$ is smooth at $0$, then the link is diffeomorphic to a sphere, and a theorem of Mumford
states that the converse holds for normal surface singularities.

\begin{thm}[\cite{Mum61}]
A normal isolated surface singularity is smooth if and only if
the link is diffeomorphic to a sphere.
\end{thm}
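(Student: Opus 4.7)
The plan is to handle the two implications separately. The easy direction is immediate: if $X$ is smooth at $0$, then an analytic chart identifies a neighborhood of $0$ in $X$ with a neighborhood of the origin in $\C^2$, and intersecting with a small sphere in $\C^2$ yields the standard $S^3$, so $L_X$ is diffeomorphic to a sphere.

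For the nontrivial direction, I would pass to a good resolution $\pi \colon \~X \to X$ with exceptional divisor $E = \bigcup_i E_i$ a simple normal crossings divisor whose components $E_i$ are smooth curves meeting transversally. The link $L_X$ is diffeomorphic to the boundary of a regular tubular neighborhood of $E$ in $\~X$. This neighborhood is a plumbed $4$-manifold whose topology is encoded by the weighted dual graph $\Gamma$: one vertex for each $E_i$, weighted by the genus $g_i$ and by the self-intersection number $E_i^2$, with edges recording the intersection points $E_i \cap E_j$. The first step of the argument is then to extract from $\Gamma$ an explicit presentation of $\pi_1(L_X)$ in terms of meridians of the $E_i$, with relations dictated by the plumbing data; abelianizing recovers $H_1(L_X;\Z)$, whose torsion is controlled by the intersection matrix $M = (E_i \cdot E_j)$.

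The second step is to invoke the fact (essentially due to Du Val and Mumford) that $M$ is negative definite, since $E$ is contracted to a normal point by $\pi$. With negative definiteness in hand, the goal becomes to show that the plumbing presentation defines a nontrivial group whenever $\Gamma$ is nonempty. If one can prove this, then $L_X$ simply connected forces $\Gamma = \emptyset$, hence $\pi$ is an isomorphism and $X$ is smooth at $0$; note that for surfaces the hypothesis that $L_X$ is diffeomorphic to a sphere already implies $\pi_1(L_X) = 1$, which is the only input the argument really needs.

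The main obstacle is precisely this last combinatorial/group theoretic step: producing a nontrivial element in $\pi_1$ of the plumbed boundary under the sole hypothesis of negative definiteness. I would approach it by first reducing to a minimal graph (blowing down $(-1)$-curves of genus zero meeting at most two other components) and then either exhibiting explicit nontrivial cycles on the graph, or applying a direct inspection when all $g_i$ vanish and the graph is a tree, which is the case where abelianization alone can fail to detect nontriviality. This is the delicate point of Mumford's original proof, and the fact that its higher dimensional analogue breaks down is precisely what motivates the contact- and CR-geometric refinements of the link studied in the remainder of the paper.
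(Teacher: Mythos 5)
The paper does not prove this statement at all: it is quoted directly from Mumford's paper \cite{Mum61} as background, so the only comparison available is with Mumford's original argument, which is exactly the route you outline (good resolution, plumbed tubular neighborhood of the exceptional divisor, weighted dual graph, negative definiteness of the intersection matrix, and nontriviality of $\pi_1$ of the plumbed boundary unless the graph is empty). Your easy direction and the reductions you describe are fine, and you are right that simple connectivity of the link is the only input needed.

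However, as a proof your proposal has a genuine gap precisely at the step you yourself flag as the ``main obstacle'': showing that a nonempty negative definite plumbing graph forces $\pi_1$ of the boundary to be nontrivial. The two fallback ideas you offer do not close it. Exhibiting cycles in the graph (or positive genus vertices) only detects nontrivial $H_1$, and in the remaining case --- a tree of rational curves --- abelianization can be completely blind: the intersection matrix may be unimodular, so $H_1(L_X;\Z)=0$, yet $\pi_1$ is nontrivial (the $E_8$ tree gives the Poincar\'e homology sphere, with $\pi_1$ the binary icosahedral group). ``Direct inspection'' of this case is not an argument; it is the technical heart of Mumford's paper, where he writes down the explicit presentation of $\pi_1$ coming from the plumbing (meridians $e_i$ with relations $e_i^{-E_i^2}=\prod_{j\sim i} e_j$ suitably ordered, plus commutation at each vertex), and proves by a delicate induction on the tree, using the negativity of the self-intersections, that this group is trivial only when the graph consists of a single rational $(-1)$-curve, i.e.\ only when the point is smooth. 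Until that group-theoretic nontriviality statement is actually established, the proposed proof of the hard implication is incomplete; everything before it is correct scaffolding but does not by itself rule out a singular point whose link is a homotopy sphere.
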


The converse fails, however, in higher dimensions, even assuming that the
singularity is normal \cite{Br66a,Br66b,Mil68}.

The link inherits a contact structure from its embedding in $X$
given by the hyperplane distribution
\[
\x = T(L_X) \cap J_0(T(L_X)) \subset T(L_X)
\]
where $J_0 \colon T(X) \to T(X)$, with $J_0^2=-1$, is the complex structure.
The contactomorphism class of the link
is an analytic invariant of the singularity \cite{Var80}.
In particular, if $X$ is smooth at $0$ then the link $L_X$ is contactomorphic to the
standard contact sphere $S^{2n-1} \subset \C^n$.
The following question was considered by McLean.

\begin{question}[\cite{McL16}]
\label{q:McL}
Does the contact structure of the link suffice to characterize smooth
points among normal isolated singularities?
\end{question}

McLean proves that this is the case in dimension three.

\begin{thm}[\cite{McL16}]
A normal isolated 3-fold singularity is smooth if and only if
the link is contactomorphic to the standard contact sphere.
\end{thm}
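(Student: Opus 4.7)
The forward implication is immediate from the definitions, so it remains to show that a normal isolated $3$-fold singularity whose link is contactomorphic to the standard contact sphere $(S^5, \x_{\mathrm{std}})$ must be smooth. My plan is to extract from the contactomorphism class of $(L_X, \x)$ a numerical invariant equal to the minimal log discrepancy $\mld(X, 0)$, and then to appeal to the fact that for a normal $\Q$-Gorenstein singularity $\mld_x(X) = \dim X$ if and only if $x$ is smooth. A direct computation on the standard sphere yields $n$ for this invariant, so in dimension $3$ the hypothesis forces $\mld(X,0) = 3$ and hence smoothness.

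To construct the invariant, I would work with Reeb dynamics on $L_X$ in the spirit of McLean's approach. Choose a contact form $\alpha$ whose Liouville filling is (a deformation of) a log resolution $\pi \colon Y \to X$ equipped with a compatible plurisubharmonic exhaustion: the periodic Reeb orbits then organize into Morse--Bott families indexed by effective exceptional $1$-cycles $\sum k_i E_i$, and for each family the action and Conley--Zehnder index are computable in terms of $\sum k_i a(E_i, X)$ and the orders of vanishing $\ord_{E_i}$. Feeding these data into a filtered ($S^1$-equivariant) symplectic cohomology of the filling --- or, equivalently, an appropriately normalized minimum over Reeb orbits of fixed index --- produces a well-defined invariant of $(L_X, \x)$ that, after varying the resolution, recovers $\min_E a(E, X) = \mld(X, 0)$. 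To guarantee that discrepancies are defined in the first place, one invokes that contactomorphism with $(S^5, \x_{\mathrm{std}})$ implies $L_X$ is a rational homology sphere, which together with structural results on normal $3$-fold singularities yields numerical $\Q$-Gorensteinness.

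The main obstacle lies in the construction and evaluation of the Reeb invariant: one must verify both that the contribution of each Morse--Bott family of orbits is governed precisely by the discrepancy of the corresponding exceptional divisor, and that no extraneous orbits disturb the count. This requires a delicate Floer-theoretic transversality analysis together with a precise index/action computation at the orbits supported over the exceptional locus, which forms the technical heart of McLean's method. A secondary but nontrivial point is the dimension-$3$ reduction to the $\Q$-Gorenstein setting, which is what currently prevents the argument from being upgraded to arbitrary dimension and motivates the refined CR-theoretic invariant considered later in the present paper.
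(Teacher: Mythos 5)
Your outline is essentially McLean's own argument, which is also how the present paper treats this statement (it is cited, and its structure is recalled in the proof of Theorem~\ref{t:link}): use the topology of the link to get numerical $\Q$-Gorensteinness, use Reeb orbits and Conley--Zehnder/SFT indices to show that the minimal log discrepancy is a contactomorphism invariant of $(L_X,\x)$ (McLean's ``highest minimal SFT index'', with $\hmi = 2\mld_0(X)-2$), evaluate it on the standard sphere to force $\mld_0(X)=n$, and conclude smoothness in dimension $3$. So the route is the same, with the Floer-theoretic heart correctly deferred to \cite{McL16}.

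Two points need correction, though. First, what you call ``the fact that for a normal $\Q$-Gorenstein singularity $\mld_x(X)=\dim X$ if and only if $x$ is smooth'' is not a fact in general: it is Shokurov's conjecture, and it is a theorem only in dimension at most $3$. This, and not the reduction to the $\Q$-Gorenstein setting, is the genuine dimensional bottleneck; the passage from $H^2(L_X;\Q)=0$ to numerical $\Q$-Gorensteinness and then to honest $\Q$-Gorensteinness works in every dimension, so your closing sentence misidentifies what prevents the argument from generalizing (the present paper's Theorem~\ref{t:mld} is precisely a partial result towards that conjecture). Second, there is a small gap between what you establish and what you invoke: the link topology gives you only that $K_X$ is \emph{numerically} $\Q$-Cartier, and McLean's index computation gives the numerically defined $\mld$, whereas the dimension-$3$ smoothness criterion you appeal to is stated for $\Q$-Gorenstein $X$. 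One must bridge this, e.g.\ as in the paper via \cite[Corollary~1.4]{BdFFU15}: positivity of the numerical minimal log discrepancy implies $X$ is log terminal in the sense of \cite{dFH09}, hence has rational singularities, hence $K_X$ is actually $\Q$-Cartier; only then does the $\mld_0(X)=3$ conclusion yield smoothness.
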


McLean's strategy is to first observe that a normal isolated singularity $0 \in X$
whose link is contactomorphic to the standard contact sphere is numerically $\Q$-Gorenstein.
Via a delicate analysis of the geometry and dynamics of
Reeb vector fields, McLean proves that the Conley--Zehnder index of the
Reeb orbits defined by the various contact forms on the link can be used
to compute discrepancies, hence deducing that $0 \in X$
is actually a $\Q$-Gorenstein singularity with minimal log discrepancy $\mld_0(X) = n$.
When $n=3$, this suffices to conclude that $X$ is smooth at $0$.
In higher dimensions, the same conclusion can be drawn by
assuming a conjecture of Shokurov
stating that if $\mld_0(X) = n$ then $X$ is smooth at $0$
\cite{Sho02}.\footnote{We should remark that this conjecture of Shokurov is a particular case of
a more precise conjecture that has received a lot of attention in birational geometry
due to its relevance in the minimal model program, specifically in connection to the
conjecture on termination of flips.}

In this paper, we show that this approach leads to a positive answer
to the question for a large class of singularities of all dimensions.

\begin{thm}
\label{t:link}
Let $X \subset \C^N$ be a complex analytic variety with a normal isolated singularity at $0$.
Assume that the exceptional divisor of the normalized blow-up of $X$ at $0$
has a generically reduced irreducible component.
Then $X$ is smooth at $0$ if and only if the link $L_X$ of $X$ at $0$ is
contactomorphic to the standard contact sphere.
\end{thm}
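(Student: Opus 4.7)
The ``only if'' direction is immediate: if $X$ is smooth at $0$, then in local holomorphic coordinates the link $L_X$ is the standard contact sphere $S^{2n-1} \subset \C^n$. For the converse, my plan is to follow McLean's strategy up to the point where it would invoke Shokurov's conjecture, and then exploit the hypothesis on the normalized blow-up to close the argument by a direct birational-geometric computation. So assume $L_X$ is contactomorphic to the standard contact sphere. McLean's analysis recalled in the introduction then shows that $X$ is numerically $\Q$-Gorenstein at $0$ and that $\mld_0(X) = n$; equivalently, $a_E(X) \geq n$ for every divisorial valuation $v = \ord_E$ over $X$ with center at $0$.

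Let $\pi\colon Y \to X$ be the normalized blow-up of $X$ at $0$, so that $\pi^{-1}(\frm_0) \. \O_Y$ is invertible and cuts out a Cartier exceptional divisor $\sum m_i E_i$. The hypothesis provides a component $E_0$ with $m_0 = 1$, which is the same as saying that $v(\frm_0) = 1$ for $v = \ord_{E_0}$. The task is then to derive a contradiction from the assumption that $X$ is singular at $0$, using only the two pieces of information $a_v(X) \geq n$ and $v(\frm_0) = 1$.

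The natural machinery for this is the Mather log discrepancy $\^a_v(X)$, which in the $\Q$-Gorenstein case satisfies the identity
\[
\^a_v(X) = a_v(X) + v(\Jac_X),
\]
where $\Jac_X$ denotes the Jacobian ideal sheaf. If $X$ were singular at $0$, then $\Jac_X \subset \frm_0$, and hence $v(\Jac_X) \geq v(\frm_0) = 1$, giving $\^a_v(X) \geq n+1$. A Skoda-type upper bound $\^a_v(X) \leq n \. v(\frm_0) = n$ would then provide the required contradiction and force $X$ to be smooth at $0$.

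The main obstacle is securing this Skoda-type upper bound. The classical inequality $a_v(X) \leq n \. v(\frm_0)$ fails in general, even on a smooth ambient, for divisorial valuations with highly weighted behavior on the coordinates, so the argument must exploit the fact that $v$ comes from a generically reduced exceptional component of the normalized blow-up, a restricted class of valuations. I expect the bound to follow from a direct computation on $Y$ near the generic point of $E_0$, where $Y$ is smooth (being normal and $E_0$ being a divisor) and $\pi$ can be described explicitly in terms of generators of $\frm_0$. A secondary technical point is the passage from ``numerically $\Q$-Gorenstein'' (which is what McLean provides) to a setting in which the Mather--Jacobian formula can be applied; either the upgrade is argued separately, or the formula is established in a numerical version. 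Once both points are settled, the contradiction is formal and the theorem follows.
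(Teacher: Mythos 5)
Your overall strategy is the same as the paper's: McLean's theorem reduces the problem to showing that $\mld_0(X)=n$ together with the hypothesis on the normalized blow-up forces smoothness, and this is then attacked through Mather discrepancies. However, the step you use to produce the contradiction rests on an identity that is false in the required generality. For a normal $\Q$-Gorenstein variety the correct comparison is $\^a_v(X)=a_v(X)+\tfrac 1r v(\fn_{r,X})$, where $\fn_{r,X}$ is the Nash ideal of level $r$ (cut out by the image of $(\wedge^n\Om_X)^{\otimes r}\to\O_X(rK_X)$), not the Jacobian ideal; the identity $\^a_v(X)=a_v(X)+v(\Jac_X)$ holds essentially only for local complete intersections. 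In general $\Jac_X^r=\fd_{r,X}\cdot\fn_{r,X}$ for the lci-defect ideal $\fd_{r,X}$, so $\tfrac 1r v(\fn_{r,X})\le v(\Jac_X)$, and the inequality goes the wrong way for you: from $X$ singular at $0$, $\Jac_X\subseteq\frm_0$ and $v(\frm_0)=1$ you get no lower bound on $\^a_v(X)-a_v(X)$. What the argument really needs is $v(\fn_{r,X})\ge 1$, i.e.\ that the Nash ideal is nontrivial at a singular point; by Proposition~\ref{p:Nash} this is equivalent to the Nash blow-up failing to be an isomorphism over $0$, which is Nobile's theorem \cite{Nob75} and is special to characteristic zero. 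This is exactly how Theorem~\ref{t:mld} closes the argument in the paper (if $\mld_0(X)=n$, a divisor computing $\^\mld_0(X)=n$ has $\ord_E(\fn_{r,X})=0$, so $\fn_{r,X}$ is trivial near $0$, the Nash blow-up is an isomorphism there, and Nobile gives smoothness), and it is the idea missing from your proposal.

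Two further steps are stated as expectations rather than proved. The Skoda-type bound $\^a_{E_0}(X)\le n$ is precisely what the paper imports from Ishii's theorem \cite{Ish13}, which gives $\^\mld_0(X)=n$ under the generically-reduced hypothesis; a direct verification at the generic point of $E_0$ along the lines you sketch is plausible (note the normalized blow-up is only regular in codimension one, not smooth, which is all you need there), but you have not carried it out. More seriously, McLean only gives that $X$ is numerically $\Q$-Gorenstein, while any comparison of $\^a_v$ and $a_v$ needs $K_X$ to be genuinely $\Q$-Cartier; this upgrade is not formal, and the paper obtains it from \cite[Corollary~1.4]{BdFFU15} (positivity of the numerically defined minimal log discrepancy gives log terminality in the sense of \cite{dFH09}, hence rational singularities by Elkik's theorem, hence $\Q$-Gorensteinness). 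Until the Jacobian-ideal step is replaced by the Nash-ideal/Nobile argument and these two points are supplied, the proposal is not a proof.
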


The condition on the exceptional divisor of the normalized blow-up of $X$ at $0$
is satisfied, for instance, if
the tangent cone of $X$ at $0$ is reduced
at the generic point of one of its irreducible components.

We deduce the above theorem from McLean's work and the next result, which
brings evidence to Shokurov's conjecture and therefore is of independent interest.

\begin{thm}
\label{t:mld}
Let $X$ be a normal $\Q$-Gorenstein variety of dimension $n$ and $x\in X$ a closed point such that
the exceptional divisor of the normalized blow-up of $X$ at $x$
has a generically reduced irreducible component. Then
$\mld_x(X) \le n$, and equality holds if and only if $X$ is smooth at $x$.
\end{thm}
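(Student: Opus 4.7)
The plan is to exhibit a single explicit divisor $E$ over $X$ with $a(E, X) \le n$, with equality forcing $X$ smooth at $x$. Let $\pi \colon \~X \to X$ be the normalized blow-up of $X$ at $x$, and let $E \subset \~X$ be a generically reduced component of the exceptional divisor, so that $v := \ord_E$ satisfies $v(\fm_x) = 1$; by normality of $\~X$, it is smooth at the generic point $\eta_E$. Since $\mld_x(X) \le a(E, X)$ and the standard codimension inequality gives $\mld_x(X) = n$ when $X$ is smooth at $x$, proving $a(E, X) \le n$ with the stated equality clause suffices. I would pass through the Mather log discrepancy $\hat a(E, X)$ and the Jacobian identity
\[
\hat a(E, X) = a(E, X) + \ord_E(\Jac_X),
\]
valid on normal $\Q$-Gorenstein varieties. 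Since $\ord_E(\Jac_X) \ge 0$ with vanishing equivalent to $X$ smooth at $x$ (the center of $v$), it suffices to prove $\hat a(E, X) \le n$.

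To bound $\hat a(E, X)$, I would work locally near $\eta_E$: choose generators $u_1, \ldots, u_N$ of $\fm_x$ (equivalently a local embedding $X \inj \A^N$) and local coordinates $(t, y_1, \ldots, y_{n-1})$ on $\~X$ with $E = \{t = 0\}$. Writing $\tilde u_i := \pi^* u_i = t^{a_i}(g_i + t r_i)$ with $a_i := \ord_E(\tilde u_i) \ge 1$ and $g_i \not\equiv 0$ on $E$, the hypothesis gives $\min_i a_i = 1$; set $I_1 = \{i : a_i = 1\}$. The functions $g_i|_E$ for $i \in I_1$ define a natural rational map $E \rat \P^{|I_1|-1}$ factoring through an $(n-1)$-dimensional irreducible component $P$ of the projectivized tangent cone $\P(C_x X) \subset \P^{N-1}$. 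The hard part, which is the geometric heart of the argument, is to translate the hypothesis $\ord_E(\fm_x) = 1$ into the combination: $P$ is a reduced component of $\P(C_x X)$, and $E \to P$ has ramification index $1$ at $\eta_E$. Together with characteristic zero, these force $E \to P \inj \P^{N-1}$ to be an immersion at $\eta_E$; in particular $|I_1| \ge n$ and the $|I_1| \times n$ matrix $\bigl(g_i, \partial g_i/\partial y_1, \ldots, \partial g_i/\partial y_{n-1}\bigr)_{i \in I_1}$ has rank $n$ at the generic point of $E$. This geometric dictionary requires care with the normalization map $\~X \to \Bl_x X$, the Rees algebra $\bigoplus \fm_x^k$, and its associated graded.

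Granted this, pick any $n$-element subset $S \subset I_1$ realizing the rank. For $i \in S$, $\partial \tilde u_i/\partial t = g_i + O(t)$ (vanishing order $0$ along $E$) and $\partial \tilde u_i/\partial y_j = t\, \partial g_i/\partial y_j + O(t^2)$ (vanishing order $\ge 1$); factoring $t$ from each of the $n-1$ columns indexed by $y_j$ yields
\[
\det\bigl(\partial \tilde u_i/\partial z_j\bigr)_{i \in S,\, j} = t^{n-1}\Bigl(\det\bigl(g_i, \partial g_i/\partial y_1, \ldots, \partial g_i/\partial y_{n-1}\bigr)_{i \in S} + O(t)\Bigr),
\]
whose leading coefficient is nonzero by the choice of $S$. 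Thus $d\tilde u_{i_1} \wedge \cdots \wedge d\tilde u_{i_n}$ has vanishing order exactly $n-1$ along $E$ in $\omega_{\~X}$. Since such wedges generate the image of $\pi^*\wedge^n\Omega_{X/\C} \to \omega_{\~X}$, we conclude $\ord_E(\hat K_{\~X/X}) \le n-1$, so $\hat a(E, X) \le n$. Applying the Jacobian identity gives $a(E, X) \le n$, and equality forces $\ord_E(\Jac_X) = 0$, hence $X$ is smooth at $x$.
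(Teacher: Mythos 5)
Your equality case rests on a false identity, and this is the substantive gap. For a general normal \(\Q\)-Gorenstein variety the comparison between Mather and usual log discrepancies is \emph{not} \(\widehat{a}_E(X)=a_E(X)+\ord_E(\Jac_X)\); the correct statement, which the paper uses, is \(\widehat{a}_E(X)=a_E(X)+\tfrac1r\ord_E(\fn_{r,X})\), where \(\fn_{r,X}\) is the ideal cut out by the image of \((\wedge^n\Om_X)^{\otimes r}\to\O_X(rK_X)\) and \(r\) is an index of \(K_X\). Your identity holds for (normal) lci varieties, but in general \(\tfrac1r\ord_E(\fn_{r,X})\) can be strictly smaller than \(\ord_E(\Jac_X)\) (the discrepancy between the two is measured by the lci-defect ideal). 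Consequently, from \(\mld_x(X)=n\) and \(\widehat{a}_E(X)\le n\) you only get \(\ord_E(\fn_{r,X})=0\), not \(\ord_E(\Jac_X)=0\), and smoothness cannot be read off from the Jacobian criterion. Triviality of \(\fn_{r,X}\) near \(x\) does imply smoothness, but this is genuinely nontrivial: the paper proves it by identifying the Nash blow-up of \(X\) with the blow-up of \(\fn_{r,X}\) (Proposition~\ref{p:Nash}) and then invoking Nobile's theorem that in characteristic zero the Nash blow-up is an isomorphism only along the smooth locus. That ingredient is entirely missing from your argument and cannot be bypassed the way you propose.

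On the inequality, your computation on the normalized blow-up is a reasonable substitute for the paper's appeal to Ishii's theorem \(\widehat{\mld}_x(X)=n\), but as written it is incomplete: the step you yourself flag as ``the hard part'' is both unproven and misstated. The hypothesis \(\ord_E(\fm_x)=1\) does \emph{not} imply that \(P\) is a reduced component of \(\P(C_xX)\) or that \(E\to P\) has degree one: the cycle multiplicity of \(P\) in the exceptional divisor of \(\Bl_xX\) equals \(\sum_i \ord_{E_i}(\fm_x)\,[k(E_i):k(P)]\) over \emph{all} components \(E_i\) lying over \(P\), which can exceed \(1\) even when your chosen \(E\) is generically reduced. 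Fortunately neither claim is needed: in characteristic zero the finite dominant map \(E\to P_{\red}\) is automatically generically \'etale and \(P_{\red}\) is generically smooth, so the composite \(E\to\P^{N-1}\) is an immersion at a general point of \(E\); the hypothesis \(\ord_E(\fm_x)=1\) is used only to make the \(t\)-derivative column have order zero, and the rest of your determinant computation then gives \(\widehat{a}_E(X)\le n\), hence \(\mld_x(X)\le n\). So the inequality can be salvaged along your lines, but the ``equality implies smoothness'' half needs the Nash-ideal/Nash blow-up argument (or an equivalent), not the Jacobian ideal.
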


The proof of this theorem relies on a theorem of Ishii on minimal Mather log discrepancies
\cite{Ish13}. The bound $\mld_x(X) \le n$ in the setting of the theorem is a direct application of
Ishii's result, and our contribution is to observe that equality holds
only if $X$ is smooth at $x$, a property we deduce by looking at the Nash
blow-up of $X$.

We do not know how to extend these results to the case where the
exceptional divisor of the normalized blow-up
has no reduced irreducible components.
We suspect, however, that by keeping track of additional structure of the link
one can still detect smoothness among normal isolated singularities.

To this end, we look at the CR structure that the link inherits from
the complex structure of $X$.
This structure is defined by
\[
T_{1,0}(L_X) = T^{1,0}(X) \cap (T(L_X) \otimes \C).
\]
It is determined by the tangential Cauchy--Riemann equations in $X$ along $L_X$.
The contact structure $\x$ can be recovered as the real part of the
complex bundle $T_{1,0}(L_X) \oplus \ov{T_{1,0}(L_X)}$,
where the bar denotes complex conjugation.

The CR structure of the link uniquely characterizes the singularity \cite{Sch86}.
It has long been used to investigate isolated singularities;
see \cite{Hua06} for a survey on recent studies in this direction.
In his foundational work \cite{Tan75}, Tanaka devoted a chapter to this problem, and asked
whether the sphere $S^{2n-1}$ equipped with any strongly pseudoconvex CR structure that extends
the standard contact structure can be CR embedded into $\C^n$ (cf. \cite[Page~80]{Tan75}).
It is interesting to observe that a positive answer to Tanaka's question would imply
a positive answer to Question~\ref{q:McL} (cf.\ Remark~\ref{r:Tanaka}).

It should be noted, however, that the CR structure is not an invariant
of the singularity, as it may depends on the embedding of $X$ in $\C^N$ and
the radius $\ep$ of the sphere cutting out the link.

In order to define an invariant of the singularity, we
consider an equivalence relation among compact strongly pseudoconvex CR structures
where certain types of deformations of the structure are allowed.
We call such equivalence relation \emph{cohomological CR deformation equivalence}.

Essentially, two compact strongly pseudoconvex CR manifolds
are in the same equivalence class if they can be
deformed into each other using finitely many deformation families, parameterized over $\C$,
that preserve certain cohomological 
dimensions throughout the deformation and whose
total spaces are strongly pseudoconvex CR manifolds (see Definition~\ref{d:CR-equiv}).
The fact that the cohomological CR deformation equivalence class of a link
gives an analytic invariant of the singularity is proved in Theorem~\ref{t:CR}.

Using this notion, we conjecture that a normal isolated singularity
is smooth if and only if the cohomological CR deformation equivalence class
of the link is the class of the standard CR sphere (see Conjecture~\ref{c:CR}). 
This conjecture is closely related to the complex Plateau problem, which 
was completely solved for isolated hypersurface singularities in \cite{Yau81,LY07,DY12}
and has been further investigated for arbitrary singularities in \cite{DGY16}. 

Another way of enhancing the contact structure of the link
is by looking at contact forms.
We consider the contact form defined on the link by the 1-form
\[
\theta = \frac{\sqrt{-1}}{|z|^2} \sum_{i=1}^N (z_i d\ov{z}_i - \ov{z}_i dz_i)
\]
where $(z_i)$ are the coordinates of $z$ in $\C^N$.
The contact structure $\x$ on $L_X$ is simply the kernel of the form $\theta|_{L_X}$.

We expect that smoothness can also be characterized by tracing this form as one
lets the radius of the sphere cutting out the link tend to zero (see Conjecture~\ref{c:contact}).
This conjecture is independent of the previous conjecture
and provides a different perspective on the problem.
We refer to the last section of the paper for more details on this
alternative approach.

\subsection{Acknowledgments}

Our interest in this problem originated from
an inspiring talk Mark McLean gave at the University of Utah in December 2014 on his work \cite{McL16};
we warmly thank him for explaining to us some of the key ideas and techniques behind his proof,
and for his valuable feedback of preliminary drafts of this paper.
We thank Mihai P\u aun for his remarks on an old version of the paper,
Eduard Looijenga for suggesting the proof of Corollary~\ref{c:link-stability}
and Hugo Rossi the proof of Corollary~\ref{c:unique-filling}, and
Steven Yau for pointing out the connection between Conjecture~\ref{d:CR-equiv} and 
his work on the complex Plateau problem.
Finally, we thank the referees for useful comments.

\section{Proof of Theorem~\ref{t:mld}}

Throughout this and the next section, we denote by $K_X$ a canonical divisor of a normal variety $X$.
If $f \colon Y \to X$ is a proper birational map from another normal variety, then
the canonical divisors of $X$ and $Y$ will always be implicitly assumed to be chosen compatibly,
so that $f_*K_Y = K_X$.

We start by recalling some definitions.
Let $X$ be an $n$-dimensional normal variety. We say that $X$ is \emph{$\Q$-Gorenstein} if
the canonical divisor $K_X$ of $X$ is $\Q$-Cartier.

Let $E$ be a prime divisor on a resolution of singularities
$f \colon Y \to X$. If $X$ is $\Q$-Gorenstein, then the \emph{log discrepancy} of $E$ over $X$
is defined by
\[
a_E(X) := \ord_E(K_{Y/X}) + 1
\]
where $K_{Y/X} = K_Y - f^*K_X$ is the relative canonical divisor
(note that this is a $\Q$-divisor).
In general (i.e., without assuming that $X$ is $\Q$-Gorenstein), one defines the
\emph{Mather log discrepancy} of $E$ over $X$ to be
\[
\^a_E(X) := \ord_E(\Jac_f) + 1
\]
where $\Jac_f = \Fitt^0(\Om_{Y/X})$.
By taking the infimum of these numbers
over all choices of $E \subset Y \to X$ such that the image of $E$ in $X$
is a fixed closed point $x \in X$, one defines
the \emph{minimal log discrepancy} $\mld_x(X)$ and
the \emph{minimal Mather log discrepancy} $\^\mld_x(X)$ of $X$ at $x$.

If $X$ is smooth, then log discrepancies and Mather log discrepancies are the same.
In general, on a $\Q$-Gorenstein variety $X$, they compare as follows.
Let $r$ be a positive integer such that $rK_X$ is Cartier, and consider the natural map
\[
(\wedge^n\Om_X)^{\otimes r} \to \O_X(rK_X).
\]
The image of this map is equal to $\fn_{r,X} \. \O_X(rK_X)$ for some ideal
sheaf $\fn_{r,X} \subset \O_X$, which we call the \emph{Nash ideal
of level $r$} of $X$. 
It follows that
\[
\^a_E(X) = a_E(X) + \tfrac 1r \ord_E(\fn_{r,X}).
\]

The name given to $\fn_{r,X}$ comes from the following property, which is certainly well-known
to the experts.
Recall that the \emph{Nash blow-up} $\n \colon \^X \to X$ of an $n$-dimensional variety $X$ is defined
by taking the closure in the Grassmannian bundle $\Gr(\Om_X,n)$ of $\Gr(\Om_{X_\reg},n)$, which
is naturally isomorphic to $X_\reg$.\footnote{Assuming that
$X$ is a closed subvariety of a smooth variety $M$,
the Nash blow-up of $X$ can equivalently be defined by taking the closure of $X_\reg$
in the Grassmannian bundle $\Gr(\Om_M,n)$.
We should point out that the construction of the Nash blow-up
was already considered by Semple \cite{Sem54}, before Nash.}

\begin{prop}
\label{p:Nash}
With the above notation (i.e., assuming that $rK_X$ is Cartier),
the Nash blow-up $\^X$ of $X$ is isomorphic to the blow-up of $\fn_{r,x}$.
\end{prop}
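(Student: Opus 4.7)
The plan is to realize both the Nash blow-up $\hat X$ and the blow-up $\Bl_{\fn_{r,X}} X$ as the closure of the same rational section of the projective bundle $\P_X((\wedge^n \Omega_X)^{\otimes r})$ over $X$.

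First I would use the relative Pl\"ucker embedding $\Gr(\Omega_X,n)\hookrightarrow\P_X(\wedge^n\Omega_X)$, composed with the relative $r$-th Veronese embedding $\P_X(\wedge^n\Omega_X)\hookrightarrow \P_X((\wedge^n\Omega_X)^{\otimes r})$, to realize the Grassmannian bundle as a closed subscheme of $\P_X((\wedge^n\Omega_X)^{\otimes r})$. Since $\hat X$ is by definition the closure of $X_\reg$ inside $\Gr(\Omega_X,n)$, it coincides with the closure of $X_\reg$ inside the larger bundle $\P_X((\wedge^n\Omega_X)^{\otimes r})$; in this larger bundle $X_\reg$ is embedded as the section determined by the tautological isomorphism $(\wedge^n\Omega_X|_{X_\reg})^{\otimes r}\xrightarrow{\sim} \O_X(rK_X)|_{X_\reg}$. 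The purpose of passing through the Veronese is precisely to replace the only $\Q$-Cartier sheaf $\omega_X$ by the genuine line bundle $\O_X(rK_X)$.

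Next I would identify this closure with $\Bl_{\fn_{r,X}} X$ via the standard ``graph-closure equals blow-up'' principle. The natural $\O_X$-linear map $(\wedge^n\Omega_X)^{\otimes r}\to \O_X(rK_X)$ has image $\fn_{r,X}\cdot\O_X(rK_X)$, and since the target is a line bundle this map determines a rational section $X\rat \P_X((\wedge^n\Omega_X)^{\otimes r})$ with indeterminacy locus $V(\fn_{r,X})$. On the blow-up $\mu\colon\Bl_{\fn_{r,X}}X\to X$ the pullback $\mu^{-1}\fn_{r,X}$ is invertible, so the image $L$ of $\mu^*(\wedge^n\Omega_X)^{\otimes r}\to \mu^*\O_X(rK_X)$ is a line bundle, and the induced surjection $\mu^*(\wedge^n\Omega_X)^{\otimes r}\surj L$ provides the universal extension of the rational section. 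This identifies $\Bl_{\fn_{r,X}}X$ with the same closure of $X_\reg$ in $\P_X((\wedge^n\Omega_X)^{\otimes r})$, completing the proof.

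The main obstacle I anticipate is purely scheme-theoretic: checking that (i) the relative Pl\"ucker–Veronese composition is a closed immersion, so that the closures of $X_\reg$ taken in $\Gr(\Omega_X,n)$ and in $\P_X((\wedge^n\Omega_X)^{\otimes r})$ agree scheme-theoretically and not merely set-theoretically, and (ii) the graph-closure of the rational section is $\Bl_{\fn_{r,X}}X$ on the nose rather than just birational to it. Both points reduce to standard facts -- the Pl\"ucker and Veronese embeddings being closed immersions in the absolute case, and the universal property of blowing up combined with the fact that the relevant line bundle quotient is controlled by the invertible pullback of $\fn_{r,X}$ -- but some care with scheme structure is needed to conclude a genuine isomorphism $\hat X\cong \Bl_{\fn_{r,X}}X$.
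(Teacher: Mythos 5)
Your proposal is correct, but it organizes the argument differently from the paper. The paper first quotes the general fact (from [OZ91]) that if $\wedge^n\Om_X \to \L$ is a generically injective map to an invertible sheaf with image $\fa\.\L$, then $\^X \cong \Bl_\fa X$; it then chooses such an auxiliary $\L$ so that $(\wedge^n\Om_X)^{\otimes r} \to \L^{\otimes r}$ factors through $\O_X(rK_X)$, observes that the second arrow $\O_X(rK_X)\to\L^{\otimes r}$ is multiplication by some nonzero $h$, deduces the ideal identity $\fa^r = (h)\.\fn_{r,X}$, and concludes since blowing up $\fa$, $\fa^r$, and $(h)\.\fn_{r,X}$ all give the same scheme. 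You instead apply the same underlying ``blow-up equals graph closure'' principle one level up: after the relative Pl\"ucker and Veronese embeddings you compare $\^X$ and $\Bl_{\fn_{r,X}}X$ directly inside $\P_X((\wedge^n\Om_X)^{\otimes r})$, using $\O_X(rK_X)$ itself as the target line bundle. This buys you a cleaner, choice-free argument: no auxiliary $\L$ (the paper realizes it via a general complete intersection), no factorization step, and no ideal arithmetic with $\fa^r$ and $(h)$. The price is that you must carry out the closure-of-graph identification at level $r$ yourself rather than citing it for $r=1$: concretely, the scheme-theoretic points you flag are handled by noting that the relative Veronese comes from $\Proj$ of the $r$-th Veronese subalgebra of $\Sym(\wedge^n\Om_X)$ (so it is a closed immersion and closures of $X_\reg$ match), and that the image of $\Sym^d\big((\wedge^n\Om_X)^{\otimes r}\big) \to \O_X(rdK_X)$ is $\fn_{r,X}^d\.\O_X(rdK_X)$, so the image graded algebra has $\Proj$ equal to $\Bl_{\fn_{r,X}}X$ and the induced closed immersion into $\P_X((\wedge^n\Om_X)^{\otimes r})$ has image exactly the closure of the section over $X_\reg$ (both are integral and contain $X_\reg$ as a dense open). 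With that spelled out, your route gives a complete and somewhat more direct proof than the one in the paper.
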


\begin{proof}
Using the Pl\"ucker embedding, it is easy to see that $\^X$ is isomorphic to the
closure of $X_\reg$ in $\P(\wedge^n\Om_X)$ via the natural isomorphism
$X_\reg \cong \P(\wedge^n\Om_{X_\reg})$.
Then the Nash blow-up can be viewed as a blow-up of an ideal as follows.
If $\wedge^n \Om_X \to \L$ is a generically injective map to an
invertible sheaf $\L$ and $\fa \subset \O_X$ is the ideal sheaf
such that the image is equal to $\fa\.\L$, then
$\^X$ is isomorphic to the blow-up of $\fa$. These general
facts about the Nash blow-up are well explained, for instance, in \cite{OZ91}.

There is a generically injective map $(\wedge^n\Om_X)^{\otimes r} \to \O_X(rK_X)$.
Pick $\L$ (and $\fa$) as above so that the tensor product map
$(\wedge^n\Om_X)^{\otimes r} \to \L^{\otimes r}$ factors through $\O_X(rK_X)$,\footnote{For instance,
if $X$ is a closed subvariety of a smooth variety $M$ and
$V \subset M$ is the complete intersection
defined by the vanishing of $c = \codim_MX$ general elements of the ideal of $X$ in $\O(M)$,
then one can take $\L = \om_V|_X$, in which case $\fa = \Jac_V|_X$ where $\Jac_V = \Fitt^n(\Om_V)$
is the Jacobian ideal of $V$.}
giving
\[
(\wedge^n\Om_X)^{\otimes r} \to \O_X(rK_X) \to \L^{\otimes r}.
\]
Note that the image of $(\wedge^n\Om_X)^{\otimes r} \to \L^{\otimes r}$
is equal to $\fa^r\.\L^{\otimes r}$, and since
both $\O_X(rK_X)$ and $\L^{\otimes r}$ are invertible sheaves,
the map $\O_X(rK_X) \to \L^{\otimes r}$ is given by multiplication of a nonzero
element $h \in \O(X)$. It follows that
\[
\fa^r = (h)\.\fn_{r,X}
\]
in $\O_X$, and this implies that the two blow-ups are isomorphic.
\end{proof}

Let us now turn to the setting of the theorem, so that $X$ is $\Q$-Gorenstein, $x \in X$
is a closed point, and the exceptional divisor of the normalized blow-up of $X$ at $0$
has a generically reduced irreducible component.
By \cite[Theorem~1.1]{Ish13},\footnote{The cited theorem, as stated in the published version,
does not apply to our situation.
However, the theorem is stated incorrectly, and the corrected statement does apply.
We have been informed by the author of \cite{Ish13}
that an Erratum is in the process of being submitted.}
the condition on the exceptional divisor of the normalized blow-up implies that
\[
\^\mld_x(X) = n.
\]
Since $\ord_E(\fn_{r,X}) \ge 0$, this immediately gives
\[
\mld_x(X) \le n.
\]
Suppose that $\mld_x(X) = n$. Since $\^a_E(X) - a_E(X) = \frac 1r \ord_E(\fn_{r,X})\in \frac 1r \Z$
for every prime divisor $E$ over $X$,
we must have $\ord_E(\fn_{r,X}) = 0$ for some $E$ with center in $x \in X$,
which is only possible if $\fn_{r,X}$ is locally trivial in a neighborhood of $x$.
Then Proposition~\ref{p:Nash} implies
that the Nash blow-up $\n \colon \^X \to X$ is an isomorphism near $x$.
As we are in characteristic zero, we deduce that $X$ is smooth at $x$ by
\cite[Theorem~2]{Nob75}.

\section{Proof of Theorem~\ref{t:link}}

Theorem~\ref{t:link} simply follows by combining Theorem~\ref{t:mld}
with \cite[Corollary~1.4]{McL16}. For the convenience of the reader,
we outline the various steps of the proof, following \cite{McL16}.
This will show how several different ways of looking at a singularity come into play.

Let $X \subset \C^N$ be a variety with a normal isolated singularity at the origin $0 \in \C^N$,
and let $L_X = X \cap S^{2N-1}_\ep$ be the link of $X$
cut out by a sphere of sufficiently small radius $\ep$.
We can assume without loss of generality that $X$ is smooth away from $0$.

The starting point is to observe that if $L_X$ is diffeomorphic to
the sphere $S^{2n-1}$ then $X$ is \emph{numerically $\Q$-Gorenstein} (cf.\ \cite[Lemma~3.3]{McL16}).
This means that for some (equivalently, for any) resolution of singularities $f \colon Y \to X$
there is a $\Q$-divisor $f^*_\num K_X$ on $Y$, called the \emph{numerical pull-back} of $K_X$,
which is characterized by the properties that $f_*f^*_\num K_X = K_X$ and
$f^*_\num K_X \. C = 0$ for every curve $C \subset Y$ that is contracted by $f$.
We equivalently say that $K_X$ is \emph{numerically $\Q$-Cartier}.\footnote{The proof
actually shows that in our setting $K_X$ is numerically Cartier, which means that
$f^*_\num K_X$ is an integral divisor.}

To see this, let $f \colon Y \to X$ be a resolution such that
$\Ex(f) = \Supp(f^{-1}(0)) = \bigcup E_i$, where $E_i$
are prime divisors. Let $U = f^{-1}(X \cap B^{2N}_\ep)$ where $B^{2N}_\ep \subset \C^N$
is the closed ball of radius $\ep$. Note that $U$ is a $2n$-dimensional orientable real
manifold with boundary $\de U \simeq L_X$.
Since $L_X \simeq S^{2n-1}$, we have $H^2(\de U;\Z) = 0$, and hence the
the map $H^2(U,\de U;\Z) \to H^2(U;\Z)$ is an isomorphism. On the other hand,
by Lefschetz duality the cohomology group $H^2(U,\de U;\Z)$ is isomorphic to $H_{2n-2}(U;\Z)$,
and the latter is generated by the classes $[E_i]$ since $\Ex(f) \inj U$ is a homotopy equivalence.
If $\sum a_i[E_i] \in H_{2n-2}(U;\Z)$ is the element corresponding to
$c_1(T^*(Y)|_U) \in H^2(U;\Z)$ under these isomorphisms, then for every
curve $C \subset \Ex(f)$ we have
\[
\big(\sum a_i E_i\big)\.C = c_1(T^*(Y)|_U)(C) = K_Y \. C.
\]
This means that $f_\num^*K_X := K_Y - \sum a_i E_i$ is the numerical pull-back of $K_X$, and hence
$X$ is numerically $\Q$-Gorenstein.

Using the numerical pullback of $K_X$, one defines the relative
canonical divisor of $X$ by setting
\[
K_{Y/X}^\num :=  K_Y - f_\num^*K_X = \sum a_i E_i.
\]
Then, for every prime divisor $E_i \subset Y$, one defines the
\emph{log discrepancy}
\[
a_{E_i}(X) := \ord_{E_i}(K_{Y/X}^\num) + 1 = a_i+1.
\]
Notice that this definition agrees with the one given in the previous section
under the additional assumption that $K_X$ is $\Q$-Cartier.

The next step relates the contact geometry of the link to log discrepancy.
This is by far the deepest and most difficult part of the proof, and it
is the core contribution of \cite{McL16}. Before we can state the main result of \cite{McL16},
we need to recall some definitions.

Let $M = (M,\x)$ be a \emph{contact manifold}.
That is, $M$ is an odd dimensional differentiable real manifold and
$\x \subset T(M)$ is a hyperplane distribution locally
defined by $\x = \ker\a$ where $\a$ is a differential 1-form such that the top form
$\a \wedge (d\a)^m$ is nowhere zero, where we set $\dim M = 2m+1$. 
If defined globally, any such form $\a$ induces an orientation on the bundle $T(M)/\x$,
and is called a \emph{contact form} on $M$.
If a contact form $\a_0$ on $M$ is given, then we say that the contact manifold is 
\emph{cooriented}, and any other contact form $\a$ is said to be
\emph{cooriented} if it induces the same orientation on $T(M)/\x$ as $\a_0$.
In this paper, we always focus on cooriented contact manifolds with given contact forms.
A \emph{Reeb vector field} of a contact form $\a$ is the unique vector field $R$ on $M$ such that
$d\a(R, -)= 0$ and $\a(R) = 1$. The \emph{Reeb flow} of $\a$ is the flow of $R$.
A \emph{Reeb orbit} of a contact form $\a$ is a closed orbit $\g \colon \R/L\Z \to M$
(where $L$ is the period) of the Reeb flow, so that $\g'(t) = R(\g(t))$.

In the situation at hand, let $\a_0$ be the contact form on the link $L_X$ induced by
the 1-form $i \sum_{\a=1}^N\(z^\a d\ov{z}^\a - \ov{z}^\a dz^\a\)$ on $\C^N$.
Note that the contact structure $\x \subset T(L_X)$ inherits a complex structure
from the complex structure $J_0$ of $T(X)$ that is compatible with $d\a_0$.
Since $L_X \simeq S^{2n-1}$, we have $H^2(L_X,\Z) = 0$,
and therefore the first Chern class $c_1(\x)$ of $\x$ (which is defined
with respect to such complex structure of $\x$) is zero.
This implies that the complex line bundle $\wedge^{n-1}\x$ is trivial.
Then, for any contact form $\a$ that is cooriented with $\a_0$, and any
Reeb orbit $\g \colon \R/L\Z \to L_X$ of $\a$, one can define the
\emph{Conley--Zehnder index} $\CZ(\g) \in \frac 12\Z$ of $\g$
by looking how many times the linearized return map of $\g$ goes
around the origin of $\C$ in a given trivialization of the dual of $\wedge^{n-1}\x$
(see \cite[Lemma~2.10]{McL16} or the original source \cite{RS93} for the precise definition).
McLean then defines the \emph{lower SFT index} of $\g$ to be
\[
\lSFT(\g) := \CZ(\g) - \frac 12 \dim \ker(D\phi_L|_{\xi\cap T_{\gamma(0)}L_X} - \id) + (n-3),
\]
where $D\f_L|_{T_{\gamma(0)}L_X} \colon T_{\gamma(0)}L_X \to T_{\gamma(0)}L_X$
is the linearized return map of $\g$.
For any contact form $\a$ with $\ker\a = \x$, we define the \emph{minimal SFT index} of
$\a$ to be $\mi(\a) = \inf_\g\lSFT(\g)$, where the minimum is taken over all Reeb orbits $\g$
of $\a$, and the \emph{highest minimal SFT index} of $(M,\x)$
by $\hmi(M,\x) := \sup_\a \mi(\a)$ where the supremum is taken over all non-zero
1-forms $\a$ with $\ker(\a) = \x$ respecting the coorientation of $\x$ induced by $\a_0$.

A good property of $\hmi(M,\x)$ is its invariance up to coorientation preserving contactomorphism.
The following is the main result in McLean's paper.

\begin{thm}[\protect{\cite[Theorem~1.1]{McL16}}]
\label{t:McL}
Let $X \subset \C^N$ be a variety with a normal isolated singularity at $0$,
and assume that $X$ is numerically $\Q$-Gorenstein and $H^2(L_X; \Q) = 0$.
\begin{enumerate}
  \item If $\mld_0(X) \geq 1$, then $\hmi(L_X,\x) = 2\mld_0(X) - 2$.
  \item If $\mld_0(X) < 1$, then $\hmi(L_X,\x) < 0$.
\end{enumerate}
\end{thm}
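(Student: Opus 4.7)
My plan is to set up a dictionary between the log discrepancies $a_{E_i}(X)$ of exceptional divisors on a log resolution $f\colon Y\to X$ of the singularity and the Conley--Zehnder indices of short Reeb orbits on $(L_X,\x)$, and then to extract $\hmi$ as a two-sided extremum. The numerical $\Q$-Gorenstein hypothesis lets me write $K_{Y/X}^\num = \sum (a_{E_i}(X)-1)E_i$, while the vanishing $H^2(L_X;\Q)=0$, together with the natural complex structure on $\x$, singles out a canonical homotopy class of trivializations of $\wedge^{n-1}\x$, relative to which the Conley--Zehnder index is unambiguously defined.

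To establish the lower bound $\hmi(L_X,\x)\ge 2\mld_0(X)-2$, I would construct for each exceptional divisor $E_i$ a contact form $\a_i$ on $(L_X,\x)$ whose short Reeb orbits concentrate near $E_i$. Concretely, pick a strictly plurisubharmonic defining function of the symplectic filling $U = f^{-1}(X\cap B_\ep^{2N})$ that is $S^1$-invariant on a tubular neighborhood of $E_i$ and normalized so that the $S^1$-fibers over $E_i$ are very short; the induced contact form on $\de U \simeq L_X$ then carries a Morse--Bott family of Reeb orbits $\g_i^k$ that wind $k$ times along the fibers, each of corank $2(n-1)$, so that $\lSFT(\g_i^k) = \CZ(\g_i^k)-(n-1)+(n-3) = \CZ(\g_i^k)-2$. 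Matching the local trivialization of $\wedge^{n-1}\x$ induced by the normal bundle of $E_i$ with the canonical one on $L_X$, the relative twist equals $k\cdot a_{E_i}(X)$, giving $\CZ(\g_i^k) = 2k\,a_{E_i}(X)$ and hence $\lSFT(\g_i^k) = 2k\,a_{E_i}(X)-2$. After an arbitrarily small nondegenerate perturbation, taking $k=1$ and the divisor $E_i$ that achieves $\mld_0(X)$ realizes $2\mld_0(X)-2$ as a value of $\mi(\a_i)$, yielding the lower bound in (a); the analogous construction for $\mld_0(X)<1$ produces orbits with $\lSFT<0$, though only for these specific forms.

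The main obstacle, and what I expect to be the hardest step, is the matching upper bound: for every contact form $\a$ on $(L_X,\x)$ there must exist a Reeb orbit $\g$ with $\lSFT(\g)\le 2\mld_0(X)-2$, and in case (b) one with $\lSFT(\g)<0$. My plan here is to exploit the symplectic filling $U$ through symplectic field theory: pick an almost complex structure on the completion of $U$ compatible with a divisor $E_i$ of minimum log discrepancy, and perform a neck-stretching along a contact hypersurface encircling $E_i$. SFT compactness then yields a pseudoholomorphic building whose negative asymptotic end is a Reeb orbit $\g$ of $\a$, and a Fredholm-index computation relates $\CZ(\g)$ to the first Chern number of the relevant line bundle over $E_i$, bounding it above by $2a_{E_i}(X)$ up to the usual corrections; this translates into $\lSFT(\g)\le 2\mld_0(X)-2$. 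The technical difficulties concentrate here: guaranteeing transversality for the moduli spaces, handling multiply-covered orbits in the SFT compactification, and correctly propagating the correction $-\tfrac12\dim\ker(D\f_L-\id)$ through Morse--Bott degenerations. Once these are in place, both halves of the theorem follow: (a) by combining with the lower bound, and (b) from the strict inequality $2\mld_0(X)-2<0$ whenever $\mld_0(X)<1$.
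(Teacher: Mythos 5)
First, a point of context: the paper does not prove this statement at all --- it is quoted verbatim as McLean's Theorem~1.1 from \cite{McL16}, and the paper explicitly proceeds by ``granting this result.'' So there is no in-paper argument to compare with; what you have written is a sketch of a new proof of McLean's theorem, which is the content of a long and delicate paper in its own right. Measured against that, your outline follows the right general strategy (resolution-adapted contact forms on the one hand, filling-by-resolution plus holomorphic-curve arguments on the other), but as a proof it has genuine gaps.

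The most concrete one is in your ``lower bound'' $\hmi(L_X,\x)\ge 2\mld_0(X)-2$. Exhibiting Morse--Bott orbits $\g_i^k$ with $\lSFT(\g_i^k)=2k\,a_{E_i}(X)-2$ only shows $\mi(\a_i)\le 2\,a_{E_i}(X)-2$; it does not ``realize $2\mld_0(X)-2$ as a value of $\mi(\a_i)$.'' Since $\mi$ is an infimum over \emph{all} Reeb orbits, you must show that every orbit of the constructed form --- including orbits winding around the other exceptional divisors and, crucially, around the strata where several divisors meet, whose indices involve sums $2\sum_i k_i a_{E_i}(X)-2$ over multi-wrapping numbers $k_i$, as well as any orbits not localized near $\Ex(f)$ --- has index bounded below by (approximately) $2\mld_0(X)-2$. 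Controlling all of these, and showing the supremum over forms actually attains $2\mld_0(X)-2$ rather than merely approaching it, is a substantial part of McLean's argument that your construction of a form adapted to a single divisor $E_i$ does not address (and when some discrepancies are negative, as in case (b), the multi-wrapping sums behave quite differently, which is why (b) is only an inequality). The second gap is the upper bound: you defer it to SFT compactness and Fredholm index bounds while listing transversality, multiple covers, and Morse--Bott corrections as ``technical difficulties,'' but these are precisely the issues that prevent a naive SFT argument from being a proof; McLean's actual argument is structured to avoid reliance on general SFT foundations, and reproducing that step is the core of his paper. As it stands, your proposal is a plausible roadmap, not a proof, and for the purposes of this paper the correct move is what the authors do: cite \cite[Theorem~1.1]{McL16}.
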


Granting this result, we can now finish the proof of Theorem~\ref{t:link}.
Let $X \subset \C^N$ be a complex analytic variety with a normal isolated singularity at $0$.
If $X$ is smooth at $0$, then its link $L_X$ of $X$ at $0$ is clearly contactomorphic to
the standard sphere. We need to check the other direction.

Assume that $L_X \simeq S^{2n-1}$. Then, as we saw above, $X$ is numerically $\Q$-Gorenstein.
By applying Theorem~\ref{t:McL}, we see that the minimal log discrepancy $\mld_0(X)$
only depends on the contact structure of the link, and therefore
is the same as $\mld_0(\C^n)$, which is equal to $n$.
Recall that, at this point, we only know that $X$ is numerically $\Q$-Gorenstein.
It follows, however, by \cite[Corollary~1.4]{BdFFU15} that $X$ is actually
$\Q$-Gorenstein.\footnote{This corollary combines several logical steps,
which might be worthwhile to explain. First, one observes that
the fact that the minimal log discrepancy defined numerically is positive
implies that $X$ is log terminal in the sense defined in \cite{dFH09},
and therefore there is an effective $\Q$-divisor $\D$ on $X$ such that
$K_X + \D$ is $\Q$-Cartier and $(X,\D)$ is log terminal. It
follows then by a well-known extension to log pairs of Elkik's theorem \cite{Elk81}
that $X$ has rational singularities, and
one concludes that $K_X$ is $\Q$-Cartier by \cite[Theorem~5.11]{BdFFU15}.}
Since we have assumed that the exceptional divisor of
the normalized blow-up of $X$ at $0$ has a generically reduced component,
we conclude by Theorem~\ref{t:mld} that $X$ is smooth at $0$.

\section{CR geometry}

This section is devoted to recalling some general facts about CR manifolds
(e.g., see \cite{DT06} for a general reference).
All manifolds will be implicitly assumed to be $C^\infty$, connected, and oriented.

A CR structure on a manifold $M$ with real dimension $2n-1$ is an
$(n-1)$-dimensional subbundle $T_{1,0}(M)$
of the complexified tangent bundle $T(M)\otimes\C$ that is closed under Lie bracket and satisfies
$T_{1,0}(M)\cap \ov{T_{1,0}(M)} = {0}$.
The \emph{Levi distribution} of a CR manifold $(M,T_{1,0}(M))$ is the subbundle
$H(M) = \Re(T_{1,0}(M) \oplus \ov{T_{1,0}(M)})$ of $T(M)$.
A \emph{pseudo-Hermitian structure} on $M$ consists of a global differential 1-form $\theta$
on $M$ such that $H(M) = \ker\theta$.
A CR manifold $(M,T_{1,0}(M))$ is said to be \emph{strongly pseudoconvex}
if for some pseudo-Hermitian structure $\theta$ the Levi form $-i\,d\theta$ is positive definite, 
that is, $-i\,d\theta(Z,\ov Z) > 0$ for every nonvanishing local section $Z$ of $T_{1,0}(M)$.

Any strongly pseudoconvex CR manifold $(M,T_{1,0}(M))$ determines a
contact manifold $(M,\x)$ by setting $\x = H(M)$, but the same contact manifold
may arise from different CR structures.\footnote{As we will
discuss below, this may occur, for instance, when taking different links of the same
isolated singularity.}

By keeping track of a strongly pseudoconvex CR structure of a contact manifold $M$,
we obtain a rather more rigid picture in terms of Stein fillings.
To this end, it is useful to recall
the following two fundamental theorems concerning embeddability and fillings of
strongly pseudoconvex CR manifolds.

\begin{thm}[\cite{BdM75}]
\label{t:BdM}
Any compact strongly pseudoconvex CR manifold $M$ of dimension at least 5
is CR embeddable in $\C^N$ for some $N$.
\end{thm}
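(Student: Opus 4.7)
The plan is to realize the embedding as a tuple $F=(F_1,\ldots,F_N)$ of globally defined CR functions on $M$, i.e.\ smooth functions $F_j\colon M\to\C$ satisfying the tangential Cauchy--Riemann equation $\bar\partial_b F_j=0$. Such a map is a CR embedding as soon as the $F_j$ separate points and their differentials span the complexified cotangent space at every point: compactness of $M$ and strong pseudoconvexity then upgrade an injective immersion to an embedding.

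First I would produce local CR functions. Near each point $p\in M$, a Kuranishi-type construction yields $n$ local CR functions $f_1,\ldots,f_n$ whose differentials at $p$ form a basis for the complexified cotangent space (with $n-1$ of them spanning $T_{1,0}^*(M)|_p$ and one spanning the contact conormal direction). Concretely, since $M$ is strongly pseudoconvex, a neighborhood of $p$ sits inside a germ of a complex manifold in $\C^n$, and the ambient coordinates restrict to local CR functions with the required independence property.

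The central step is to globalize these local functions. After multiplying $f_j$ by a smooth cutoff $\chi$ supported in a small neighborhood of $p$, one gets $g_j=\chi f_j$ on $M$ with $\bar\partial_b g_j=(\bar\partial_b\chi)\,f_j$, a smooth compactly supported $(0,1)$-form that vanishes identically in a smaller neighborhood of $p$. Here the hypothesis $\dim_\R M\ge 5$, i.e.\ CR dimension $\ge 2$, is decisive. By Kohn's subelliptic estimates for the $\bar\partial_b$-complex on compact strongly pseudoconvex CR manifolds of CR dimension $\ge 2$, the Kohn Laplacian $\Box_b=\bar\partial_b\bar\partial_b^*+\bar\partial_b^*\bar\partial_b$ has closed range on $(0,1)$-forms and admits a Hodge decomposition. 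Consequently one can solve $\bar\partial_b u_j=\bar\partial_b g_j$ with $u_j$ smooth and controlled by Kohn-type estimates, and $F_j:=g_j-u_j$ is a globally defined CR function whose jet at $p$ matches that of $f_j$ to any prescribed order. In particular, the differentials of $F_1,\ldots,F_n$ at $p$ still span the entire cotangent space. Openness of the full-rank condition yields an immersion on a neighborhood of $p$, a finite subcover gives a global CR immersion $M\to\C^m$, and for each of the finitely many pairs of points that are not yet separated one adds a further CR function built by the same cutoff-plus-solve scheme. The concatenation produces the desired CR embedding $M\hookrightarrow\C^N$.

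The principal obstacle is exactly the analytic step: Kohn's closed range theorem for $\bar\partial_b$ on $(0,1)$-forms genuinely fails in CR dimension $1$, and Rossi famously exhibited strongly pseudoconvex CR structures on $S^3$ admitting no nonconstant CR function, hence no CR embedding into any $\C^N$. Thus the entire dimensional hypothesis and all of the analytic content of the theorem are concentrated in the subellipticity of $\Box_b$ on $(0,1)$-forms; once that is in hand, the embedding is assembled by a more or less formal patching argument.
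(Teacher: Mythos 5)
This theorem is quoted in the paper from \cite{BdM75} without proof, so the comparison is with Boutet de Monvel's argument. Your overall strategy is the right one, and you correctly locate the analytic heart: Kohn's subelliptic estimates/closed range for $\bar\partial_b$ in degree $(0,1)$, valid exactly when the CR dimension is at least $2$ (i.e.\ $\dim_\R M=2n-1\ge 5$), and genuinely failing on $3$-manifolds (Rossi's and Nirenberg's examples). But two steps have real gaps. First, your local model is too strong: you invoke a ``Kuranishi-type'' local CR embedding of a neighborhood of $p$ into $\C^n$. Local embeddability of \emph{abstract} strictly pseudoconvex CR structures is a deep theorem known only in real dimension at least $7$ (Kuranishi, Akahori, Catlin), it is open in dimension $5$, and for compact $M$ deducing it from the global embedding theorem would be circular. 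What the proof actually needs, and what Boutet de Monvel constructs, are only \emph{approximate} local CR functions: using formal integrability of $T_{1,0}(M)$ one finds, by a Taylor/Borel series argument, smooth $z_1,\dots,z_n$ near $p$ with independent differentials and $\bar\partial_b z_j$ vanishing to infinite order at $p$, together with an approximate peak function $w$ satisfying $\mathrm{Re}\,w\le -c|z|^2$ near $p$ (this is where strict pseudoconvexity enters).

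Second, and more seriously, the correction step does not work as you state it. After cutting off, $\bar\partial_b g_j=(\bar\partial_b\chi)f_j$ has unit size (the cutoff derivative lands where $f_j\ne 0$), so the solution $u_j$ produced by the closed-range/Hodge theory is not small in any norm, and nothing in the global estimates pins down its behavior at $p$: $u_j$ is merely CR near $p$, and subtracting it can destroy both the value and the differential of $F_j$ there. The assertion that the jet of $F_j$ at $p$ matches that of $f_j$ ``to any prescribed order'' is unjustified and in general false. The missing idea is the large-parameter peak-function trick: one works with $\chi\, z_j e^{\lambda w}$ and $\chi\, e^{\lambda w}$, whose $\bar\partial_b$ is flat at $p$ and exponentially small in $\lambda$ on the support of $d\chi$; the global correction supplied by the closed-range estimate is then $O(e^{-c\lambda})$ in a norm that controls $C^1$ by subellipticity, so for $\lambda$ large the resulting global CR functions still immerse a neighborhood of $p$ and peak at $p$, which also handles point separation (the separation argument should be run on a compact neighborhood of the diagonal versus its complement, not on ``finitely many unseparated pairs''). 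With that smallness mechanism supplied, your outline becomes essentially Boutet de Monvel's proof; without it, the patching step collapses.
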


\begin{thm}[\cite{HL75}]
\label{t:HL}
Any compact strongly pseudoconvex CR submanifold $M \subset \C^N$ is the boundary of
a unique Stein space $V \subset \C^N$ with only normal isolated singularities.
\end{thm}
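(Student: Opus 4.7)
The plan is to invoke the Harvey--Lawson theorem on boundaries of complex analytic varieties and then upgrade its analytic output to a Stein space with normal isolated singularities by using strong pseudoconvexity.

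First, I would check that $M$ satisfies the hypothesis of the Harvey--Lawson boundary theorem. Strong pseudoconvexity forces $T_{1,0}(M) \subset T(M) \otimes \C$ to have complex rank exactly $n-1$, the maximum possible when $\dim_\R M = 2n-1$; in Harvey--Lawson's terminology, $M$ is \emph{maximally complex}. Assuming $n \geq 2$ (the only case of interest for links of isolated singularities), their boundary theorem then asserts that $M$ bounds a unique $n$-dimensional complex analytic subvariety $V \subset \C^N \setminus M$ of finite volume, in the sense that $\partial [V] = [M]$ as currents.

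Second, I would verify that $V$ is Stein with only isolated singularities. Choose a smooth defining function $\rho$ for $M$ on a neighborhood $U \subset \C^N$ with $V \cap U \subset \{\rho \leq 0\}$. Strong pseudoconvexity makes $-\log(-\rho)|_V$ a strictly plurisubharmonic exhaustion of a one-sided collar of $M$ in $V$. Combining this with $|z|^2|_V$ on the interior via a standard convex-combination trick produces a global strictly plurisubharmonic proper exhaustion of $V$, so Grauert's solution of the Levi problem implies that $V$ is Stein. The same local analysis shows $V$ is a complex manifold in a neighborhood of $M$, so $\Sing V$ is a compact analytic subset of a Stein space and is therefore a finite set of points.

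Third, for normality and uniqueness I would pass to the normalization $\tilde V \to V$. It is a finite map that is an isomorphism off $\Sing V$, so the composite $\tilde V \to V \hookrightarrow \C^N$ still defines an analytic chain with boundary $M$, differing from $V$ only over finitely many points; the uniqueness clause of the Harvey--Lawson theorem then forces $\tilde V \to V$ to be an isomorphism, so $V$ is already normal. Any rival Stein filling $V' \subset \C^N$ with normal isolated singularities is likewise a holomorphic $n$-chain bounding $M$, and the same uniqueness gives $V' = V$.

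The principal obstacle is the Harvey--Lawson boundary theorem itself, whose proof is a delicate analytic argument: one uses Bochner--Martinelli type integral kernels to build, from the maximally complex $M$, a candidate holomorphic chain in $\C^N \setminus M$, and then analyzes the tangential Cauchy--Riemann operator on $M$ to identify the correct boundary values and show that the resulting chain is closed with integral multiplicities. Once that input is in hand, the steps above are comparatively routine.
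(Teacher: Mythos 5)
The paper offers no proof of this statement: it is quoted from Harvey--Lawson \cite{HL75} and used later only through Corollary~\ref{c:unique-filling}, so your attempt can only be measured against the literature. Your reduction is the expected one --- the hard input is the Harvey--Lawson chain-boundary theorem, which you also treat as a black box --- and the upgrade to ``Stein with finitely many singular points'' follows a standard route, though as written it is loose: since $M$ has real codimension $2(N-n)+1>1$ in $\C^N$, there is no defining function for $M$; what you actually need is a strictly plurisubharmonic function on a neighborhood of $M$ in $\C^N$, vanishing on $M$, with $V$ approaching $M$ from its negative side, together with the boundary regularity of the Harvey--Lawson solution near a strictly pseudoconvex $M$ (multiplicity one, manifold with boundary). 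Granting that, holomorphic convexity (Grauert/Narasimhan) plus the maximum principle (an open subset of $\C^N$ contains no positive-dimensional compact analytic sets) gives Steinness and the finiteness of $\Sing V$; note the latter does not need Steinness.

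The genuine gap is the normality step. The pushforward to $\C^N$ of the fundamental cycle of the normalization $\tilde V \to V$ is \emph{the same} holomorphic $n$-chain as $[V]$: the two agree off the finite set $\Sing V$, which carries no mass for a current of dimension $2n$. Hence the uniqueness clause of \cite{HL75} gives no information about whether $\tilde V \to V$ is an isomorphism, and the conclusion you want is in general false for the embedded filling. For example, let $W \subset \C^4$ be the image of $\C^2$ under $(s,t)\mapsto (s,st,t^2,t^3)$ (or take the analogous $3$-fold in $\C^6$ if you want $\dim_\R M \ge 5$): then $W$ has a non-normal isolated singularity at $0$, its link $M = W \cap S^{2N-1}_\ep$ is a compact strongly pseudoconvex CR submanifold, and the unique bounded holomorphic chain in $\C^N \setminus M$ with boundary $[M]$ is the intersection of $W$ with the ball, which is not normal at $0$. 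So normality cannot be extracted from uniqueness of the chain: what \cite{HL75} yields is a filling $V \subset \C^N$ with isolated, possibly non-normal, singularities, and a normal Stein filling is obtained only after replacing $V$ by its normalization, which is no longer a subvariety of $\C^N$ but has CR-isomorphic boundary because normalization is an isomorphism off $\Sing V$. Uniqueness of that abstract normal filling is exactly what Corollary~\ref{c:unique-filling} establishes, via \cite{Ros64,RT77}, not via uniqueness of chains.
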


It follows from these theorems that any compact connected strongly pseudoconvex
CR manifold $M$ of dimension at least 5 is isomorphic to the boundary of a Stein
space $V$ with only normal isolated singularities.
While these results do not quite imply immediately that the filling is unique up to isomorphism
(in principle the Stein space $V$
may depend on the embedding $M \subset \C^N$), it is a general fact that that is the case. 
In fact, the above theorems, together with results from \cite{Ros64,RT77}, yield the following property.

\begin{cor}
\label{c:unique-filling}
Up to isomorphism, a compact strongly pseudoconvex CR 
manifold $M$ of dimension at least $5$ is the boundary of
a unique Stein space $V$ with only normal isolated singularities.
\end{cor}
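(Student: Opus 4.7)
The plan is to dispense with existence using the theorems at hand, and then establish uniqueness by extending the coordinate functions of one filling to the other via the Rossi--Taylor CR extension theory. Existence is immediate: by Theorem~\ref{t:BdM} one CR embeds $M$ into some $\C^N$, and Theorem~\ref{t:HL} then supplies a Stein filling $V \subset \C^N$ with only normal isolated singularities.

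For uniqueness, let $V_1$ and $V_2$ be two such fillings, realized by Theorem~\ref{t:HL} as subspaces $V_i \subset \C^{N_i}$, and fix a CR diffeomorphism $\phi \colon \partial V_1 \to \partial V_2$. The key step is to extend the coordinate functions $z_1,\dots,z_{N_2}$ of $\C^{N_2}$, viewed as CR functions on $\partial V_2$ and pulled back via $\phi$ to CR functions on $\partial V_1$, to holomorphic functions on the interior of $V_1$. Since $\partial V_1 \cong M$ is compact strongly pseudoconvex of real dimension at least $5$ and $V_1$ has only normal isolated interior singularities, the results of Rossi \cite{Ros64} and Rossi--Taylor \cite{RT77} provide unique such extensions continuous up to the boundary. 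Packaging them into a single holomorphic map yields $\Phi \colon V_1 \to \C^{N_2}$ restricting to the given map on the boundary.

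Next, I would conclude that $\Phi$ is a biholomorphism onto $V_2$. Compactness of $V_1$ together with Remmert's proper mapping theorem shows that $\Phi(V_1)$ is a closed analytic subset of $\C^{N_2}$ with boundary $\partial V_2$, and the uniqueness clause in Theorem~\ref{t:HL} identifies $\Phi(V_1)$ with $V_2$. Running the symmetric construction with $\phi^{-1}$ produces a holomorphic $\Psi \colon V_2 \to V_1$ extending $\phi^{-1}$, and uniqueness of the CR boundary extensions forces $\Psi \circ \Phi = \id_{V_1}$ and $\Phi \circ \Psi = \id_{V_2}$.

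The main obstacle is the boundary extension step across the singular filling: Kohn's classical $\overline\partial$-Neumann regularity handles smooth strictly pseudoconvex Stein domains, whereas here one must accommodate normal isolated interior singularities. This is precisely what the Rossi--Taylor framework delivers, exploiting normality of the singular points together with strict pseudoconvexity of the boundary and the dimension hypothesis $\dim M \geq 5$ to solve the relevant $\overline\partial$-equation with boundary control.
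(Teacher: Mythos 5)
Your existence step is exactly the paper's, and your uniqueness strategy (extend the boundary CR identification to a holomorphic map of fillings and then argue it is an isomorphism) is in the same spirit as the paper's proof, which routes the comparison through the canonical Rossi model and Yau's extension result. But as written there are two genuine gaps. First, Rossi \cite{Ros64} and Rossi--Taylor \cite{RT77} do not give you holomorphic extension of CR functions across a \emph{given} singular filling $V_1$: what they provide is the canonical filling $S(M)$, constructed as the Gelfand spectrum of the algebra of CR functions, together with a map of a collar neighborhood of the boundary of any filling into $S(M)$. To push $z_j\circ\phi$ past the singular points of $V_1$ you need an additional argument --- either a one-sided Lewy/Kohn--Rossi extension near the strongly pseudoconvex boundary followed by a Hartogs-type extension across a compact subset of the normal Stein space $V_1$ (of dimension $\ge 3$), or, as the paper does, the extension of the collar map to a finite map of all of $\ov{V}_1$ via \cite[Proposition~1.1]{Yau11}. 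Citing \cite{Ros64,RT77} for ``unique extensions continuous up to the boundary'' on $V_1$ itself is not what those results say.

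Second, the Remmert step does not work as stated: $\Phi\colon V_1\to\C^{N_2}$ is not proper (its image is bounded), so Remmert's theorem does not apply, and before you can regard $\Phi(V_1)$ as an analytic set ``with boundary $\de V_2$'' you must show that $\Phi(\ov{V}_1)\subseteq\ov{V}_2$ and that no interior point is sent to $\de V_2$. The maximum principle only places $\Phi(\ov{V}_1)$ inside the polynomial hull of $\de V_2$, which still has to be identified with $\ov{V}_2$, and excluding interior points hitting the boundary requires the local peak functions coming from strict pseudoconvexity; neither point is addressed. Moreover, Theorem~\ref{t:HL} as stated gives uniqueness among Stein fillings with \emph{normal isolated} singularities, and $\Phi(V_1)$ is not known to be such a space, so its uniqueness clause cannot be applied to the image directly. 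Note that once the interior-to-interior property is in place, your symmetric composition $\Psi\circ\Phi=\id$, $\Phi\circ\Psi=\id$ (using boundary-value uniqueness via the maximum principle) does close the argument, so the gaps are localized; the paper circumvents precisely these issues by mapping into the normal model $S(M)$, extending by Yau's proposition to a finite map, and concluding by the degree-one-plus-normality argument.
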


\begin{proof}
By Theorem~\ref{t:BdM}, $M$ can be embedded in an affine space $\C^N$, and
Theorem~\ref{t:HL} then implies that, within its embedding, it can be filled
by a Stein space $V$ with normal isolated singularities, so
that $M$ is CR isomorphic to the boundary $\de V$ of $V$. 
Let $\ov V = V \cup \de V$ denote the closure of $V$ in $\C^N$. 

Unicity of such filling follows from the results of \cite{Ros64,RT77}
which provide a canonical way of constructing $V$ up to isomorphism.
Specifically, let $A(M)$ be the Banach algebra of continuous functions on $M$ 
that can be approximated (in the uniform norm) by 
$C^\infty$ functions on $M$ satisfying the tangential Cauchy--Riemann equations.
Let $S(M)$ be the Gelfand spectrum of $A(M)$; this is the set
of maximal ideals of the ring of homomorphisms from $A(M)$ to $\C$.
By \cite[Theorem~6]{Ros64}, reinterpreted in the intrinsic setting treated in \cite{RT77}, 
the spectrum $S(M)$ can be given a complex structure so that
$S(M) = S \cup \de S$ where $S$ is a Stein space with CR boundary $\de S \cong M$, and
it follows by the definition of Gelfand spectrum that $S$ is normal. 
Moreover, for every Stein filling $V$ as above, there is a neighborhood $U \subset \ov V$ 
of $\de V$ and an injective map $\Psi \colon U \to S(M)$ mapping $\de V$ CR isomorphically to $\de S$
and inducing a holomorphic map $U \cap V \to S$. 

Arguing as in the proof of \cite[Proposition~1.1]{Yau11}, we see that $\Psi$
extends to a finite map $\Phi \colon \ov V \to S(M)$ which satisfies
$\Phi^{-1}(\de S) =\de V$, and induces a proper holomorphic map $V \to S$.
Since $\Phi$ is injective on $U$, it follows that $\Phi$ has degree one, and therefore
it is an isomorphism because $S$ is normal. 
\end{proof}

\begin{rmk}
\label{r:Tanaka}
Since every $2n-1$ dimensional strongly pseudoconvex CR submanifold $M \subset \C^n$
is the boundary of a Stein domain $U \subset \C^n$,
it follows from Corollary~\ref{c:unique-filling}
that a positive answer to the question of Tanaka mentioned in the introduction
would imply a positive answer to Question~\ref{q:McL}.
\end{rmk}

Before stating Theorem~\ref{t:HLY} below, we need to recall two notions. 
The first is the notion of Kohn--Rossi cohomology.
For the precise definition, we refer to \cite{KR65,FK72,Tan75}
(see also, \cite[Section~3]{Yau81} for a discussion). 
In short, the Kohn--Rossi cohomology
groups $H_\KR^{p,q}(M)$ of a strongly pseudoconvex CR manifold $M = (M, T_{1,0}(M))$
of dimension $2n-1$ are given, for a fixed $p$, by the cohomology of the boundary complex
\[
0 \to \cB^{p,0} \xrightarrow{\ov\de_b} \cB^{p,1} \xrightarrow{\ov\de_b} \dots
\xrightarrow{\ov\de_b}\cB^{p,n-1} \to 0
\]
where $\cB^{p,q}$ is the space of $(p,q)$-forms on $M$ 
satisfying the tangential Cauchy--Riemann equations, 
and $\ov\de_b \colon \cB^{p,q} \to \cB^{p,q+1}$ 
is the composition of the regular de Rham differential operator
followed by a projection from the space of $(p,q+1)$-forms onto $\cB^{p,q+1}$.

The following theorem implies in particular that most of the Kohn--Rossi cohomology
groups of a link of an isolated singularity are analytic invariants of the singularity, 
a fact that will be used later.

\begin{thm}[\cite{Yau81}]
\label{t:Yau81}
If $M$ is the boundary of a Stein space $V$ of complex dimension $n \ge 3$
with only isolated singularities $x_1,\dots,x_m$, 
then 
\[
\dim H_\KR^{p,q}(M) = \sum_{i=1}^m b_{x_i}^{p,q+1}  \for 1 \le q \le n-2, 
\]
where $b_{x_i}^{p,q+1} := \dim H^{q+1}_{[x_i]}(V,\Om_V^p)$ is the 
\emph{Brieskorn invariant} of type $(p,q+1)$ at $x_i$, 
which is a local analytic invariant of the singularity $x_i$.
\end{thm}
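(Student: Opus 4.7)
The plan is to reduce the statement to a local cohomology computation on $V$ by combining a Kohn--Rossi type isomorphism between $\bar\de$-cohomology of the punctured Stein space and boundary Cauchy--Riemann cohomology of $M$, with Cartan's Theorem~B applied to the Stein space $V$.

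Set $\Sigma := \{x_1,\dots,x_m\}$ and $V^\circ := V \setminus \Sigma$, so that $V^\circ$ is a connected complex manifold whose only boundary is the strongly pseudoconvex CR manifold $M$. The first ingredient I would need is the isomorphism
\[
H^q(V^\circ,\Om_V^p) \;\cong\; H^{p,q}_{\KR}(M) \qquad (1 \le q \le n-2),
\]
identifying Dolbeault cohomology on the smooth locus of a strongly pseudoconvex Stein space with boundary Cauchy--Riemann cohomology on $M$ in the middle degree range. In the smooth setting this is the content of \cite{KR65}; to adapt it to the present Stein setting with isolated singularities, I would remove from $V^\circ$ small strongly pseudoconvex neighborhoods of each $x_i$ so as to obtain a relatively compact smoothly bounded subdomain, apply Kohn--Rossi on such an exhaustion, and pass to the limit using that the relevant cohomologies turn out to be finite-dimensional.

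The second ingredient is the long exact sequence of local cohomology along $\Sigma$,
\[
\cdots \to H^q(V,\Om_V^p) \to H^q(V^\circ,\Om_V^p) \to H^{q+1}_\Sigma(V,\Om_V^p) \to H^{q+1}(V,\Om_V^p) \to \cdots
\]
Since $V$ is Stein and $\Om_V^p$ is coherent, Cartan's Theorem~B yields $H^q(V,\Om_V^p) = 0$ for every $q \ge 1$, so that the connecting maps become isomorphisms
\[
H^q(V^\circ,\Om_V^p) \;\xrightarrow{\sim}\; H^{q+1}_\Sigma(V,\Om_V^p) \;\cong\; \bigoplus_{i=1}^m H^{q+1}_{[x_i]}(V,\Om_V^p) \;=\; \bigoplus_{i=1}^m b_{x_i}^{p,q+1}
\]
for $q \ge 1$, where excision decomposes the local cohomology along the isolated points of $\Sigma$. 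Composing this with the Kohn--Rossi identification from the first step delivers the claimed formula in the range $1 \le q \le n-2$, and the local analytic invariance of $b_{x_i}^{p,q+1}$ is immediate from its definition, since local cohomology at $x_i$ depends only on an arbitrarily small neighborhood of $x_i$.

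The hard part is clearly the first step: establishing the Kohn--Rossi isomorphism in the singular Stein setting. The lower bound $q \ge 1$ is forced by the fact that Theorem~B is only available to kill $H^q(V,\Om_V^p)$ in positive degrees, while the upper bound $q \le n-2$ reflects the failure of subelliptic estimates for $\bar\de_b$ in top degree and is intrinsic to the Kohn--Rossi theorem. Once this analytic input is in hand, the remaining steps (Theorem~B, the local cohomology long exact sequence, and excision) are formal.
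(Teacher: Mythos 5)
A preliminary remark: the paper does not prove Theorem~\ref{t:Yau81} at all --- it is imported as a black box from \cite{Yau81} --- so there is no internal proof to compare you against; the relevant benchmark is Yau's original argument. Your outline in fact reproduces its architecture: identify $H^{p,q}_\KR(M)$ with the Dolbeault cohomology $H^q(V\setminus\Sigma,\Om^p_V)$ of the punctured space in the range $1\le q\le n-2$, then use the local cohomology long exact sequence for the pair $(V,\Sigma)$ together with Theorem~B (in Grauert's form for Stein spaces) to convert this into $\bigoplus_{i} H^{q+1}_{[x_i]}(V,\Om^p_V)$. The second half of your argument --- Theorem~B, the exact sequence, excision over the points of $\Sigma$, and the observation that $b_{x_i}^{p,q+1}$ depends only on the germ of $V$ at $x_i$ --- is complete and correct as written.

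The gap is the first step, and it is not a small one: it is essentially the entire analytic content of \cite{Yau81}. The isomorphism $H^q(V\setminus\Sigma,\Om^p)\cong H^{p,q}_\KR(M)$ does not follow from \cite{KR65} by the exhaustion you sketch. If you excise small strongly pseudoconvex neighborhoods of the $x_i$, the resulting compact complex manifold with boundary has two kinds of boundary components: the outer boundary $M$, which is strongly pseudoconvex, and the inner ones, which are strongly pseudo\emph{concave} from the side of the domain. The $\overline{\partial}$-Neumann/Kohn--Rossi machinery on such a domain requires condition $Z(q)$ at every boundary point, which on the concave components holds only for $q\le n-2$ --- so the restriction on $q$ enters here as well, not only through the failure of $\overline{\partial}_b$-estimates on $M$ in top degree --- and, more seriously, the Kohn--Rossi theorem relates interior cohomology to the boundary cohomology of the \emph{entire} boundary, so one must still isolate the contribution of $M$ from that of the inner components and control everything as the excised neighborhoods shrink; this uses Andreotti--Grauert type finiteness and comparison results and is precisely what occupies the technical sections of \cite{Yau81} (see also \cite{FK72}). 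So your write-up is an accurate road map of the known proof with the decisive analytic lemma assumed; either carry out that step or state it explicitly as a quoted input rather than as a limiting argument that has not been performed.
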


We also recall the definition of $p$-normal, which appears in the next theorem. 
It will not be used in the rest of the paper. 

\begin{defi}
A complex analytic variety $X$ is said to be 
\emph{$p$-normal} if $\O_X^{[p]} = \O_X$, where $\O_X^{[p]}$ is the sheaf 
defined by the pre-sheaf given by $U \mapsto \dirlim_{Z \subset U} \G(U\setminus Z, \O_X)$,
with $Z$ ranging among all closed subvarieties of $U$ of dimension $\le p$
and the limit directed by inclusion.
\end{defi}

\begin{thm}[\cite{HLY06}]
\label{t:HLY}
Let $\M$ be a real manifold of dimension $2n+1$, with $n \ge 3$, and let
$f \colon \M \to \D$ be a proper $C^\infty$ submersion, where $\D = \{t \in \C \mid |t| < 1\}$
is the open unit interval, such that every fiber $\M_t = f^{-1}(t)$,
for $t \in \D$, is a connected $(2n-1)$-dimensional submanifold of $\M$.
Suppose that $T_{1,0}(\M)$ is a strongly pseudoconvex CR structure on $\M$
such that $T_{1,0}(\M_t) := T_{1,0}(\M) \cap (T(\M_t)\otimes\C)$ is a
strongly pseudoconvex CR structure on $\M_t$ for every $t$.

Then there exist a unique (up to isomorphism) 2-normal
Stein space $\U$ which has $\M$ as part of its smooth strongly pseudoconvex boundary, and
a holomorphic map $g \colon \U \to \D$
such that, for every $t$, the fiber $\U_t = g^{-1}(t)$
is a Stein space with $\M_t$ as its smooth strongly pseudoconvex boundary.

Moreover, if the dimension of the Kohn--Rossi cohomology group $H_\KR^{0,1}(\M_t)$ of the fibers of $f$
is constant as a function of $t \in \D$, then every fiber $\U_t$ has only normal
isolated singularities.
\end{thm}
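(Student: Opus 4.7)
The plan is to construct $\U$ as a Stein filling of $\M$ by combining Boutet de Monvel (Theorem~\ref{t:BdM}) and Harvey-Lawson (Theorem~\ref{t:HL}), and to obtain $g$ as the restriction to $\U$ of the projection to the second factor of an ambient $\C^N \times \D$. First, I would verify that, after possibly reparametrizing $\D$, the map $f$ is a CR submersion. The hypothesis that $T_{1,0}(\M_t) = T_{1,0}(\M) \cap (T(\M_t) \otimes \C)$ has complex rank $n-1$ forces $df|_{T_{1,0}(\M)}$ to have rank exactly one at every point, so its image is a complex line subbundle of $f^*(T(\D) \otimes \C)$. Using strong pseudoconvexity of the fibers to fix orientation, this subbundle can be identified with $f^*T_{1,0}(\D)$, possibly after a biholomorphic reparametrization of $\D$, so that $f$ is CR. Next, Theorem~\ref{t:BdM} applied to each compact fiber $\M_t$ (real dimension $2n-1 \ge 5$ since $n \ge 3$) gives a CR embedding $\M_t \hookrightarrow \C^N$ for some $N$; using openness of strong pseudoconvexity under deformation and the CR function $f$ as the last coordinate, these embeddings can be patched into a single CR embedding $\Phi \colon \M \hookrightarrow \C^N \times \D$ with $\Phi(\M_t) \subset \C^N \times \{t\}$.

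Applying Theorem~\ref{t:HL} to $\Phi(\M)$ produces a Stein space $\U \subset \C^N \times \D$ with smooth strongly pseudoconvex boundary $\M$, and $g$ is defined as the restriction of the projection to $\D$; this is holomorphic and extends $f$. Theorem~\ref{t:HL} applied fiberwise shows that each $g^{-1}(t)$ is Stein with boundary $\M_t$. Uniqueness of $(\U, g)$ up to isomorphism reduces via the fibration to Corollary~\ref{c:unique-filling} on each $\M_t$ together with a gluing argument, while 2-normality of $\U$ follows because sections of $\O_\U$ extend across the codimension-$\ge 3$ stratum of $\Sing(\U)$ by Hartogs-type arguments on Stein spaces.

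For the moreover part, I would apply Theorem~\ref{t:Yau81} fiberwise to each $\U_t$ with $p = 0$ and $q = 1$ (assuming temporarily that $\U_t$ has only isolated singularities) to get
\[
\dim H_\KR^{0,1}(\M_t) = \sum_i \dim H^2_{[x_{i,t}]}(\U_t, \O_{\U_t}),
\]
summed over the singular points $x_{i,t}$ of $\U_t$. Combined with upper semicontinuity of local cohomology in flat families and with the 2-normality of $\U$, constancy of the left-hand side in $t$ forces each $\U_t$ to have only isolated singularities and to be Cohen--Macaulay of depth $\ge 2$ at every such point, hence normal.

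The hardest step will be the globalization in the first paragraph: patching the fiberwise Boutet de Monvel embeddings into a single CR embedding $\Phi$ with the prescribed fiber structure requires uniformity of $N$ in $t$ together with a gluing of the CR functions that respects the fibration, which rests on stability of the algebra of CR functions under small deformation. A secondary subtlety is the moreover part, where constancy of a single Brieskorn-type invariant together with 2-normality must be shown to rule out positive-dimensional components of $\Sing(\U_t)$ for every $t$.
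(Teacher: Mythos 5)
This theorem is not proved in the paper at all: it is imported verbatim from \cite{HLY06}, and the paper's only justification is the remark following the statement, which matches the first assertion with the Main Theorem of \cite{HLY06} (including statement (I)) and the second assertion with Corollary~1.5 and Remark~1.7 there. So your proposal is an attempt to reprove a deep external result, and as it stands it has genuine gaps rather than being a viable alternative route. The central one is the ``patching'' step, which you yourself flag: Boutet de Monvel's theorem gives, for each fixed $t$, some CR embedding of $\M_t$ into some $\C^{N_t}$, with no canonicity, no uniformity of $N_t$, and no control whatsoever on how the embedding varies with $t$; producing a single CR embedding $\Phi\colon \M \hookrightarrow \C^N\times\D$ compatible with the fibration (equivalently, a holomorphic map $g$ extending the boundary fibration) is precisely the content of the Huang--Luk--Yau theorem, not a routine gluing. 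The preliminary reduction is also unjustified: from the rank count you only get that $df(T_{1,0}(\M))$ is a complex line subbundle of $f^*(T(\D)\otimes\C)$ varying from point to point, and no biholomorphic reparametrization of the base can force this line to coincide pointwise with $T^{1,0}(\D)$; in other words, $\ov\de_b f = 0$ does not follow from the hypotheses, and the existence of a compatible holomorphic map to $\D$ is part of the conclusion. Note also that $\M$ itself is non-compact (it is only proper over $\D$), so one cannot sidestep the issue by applying Theorem~\ref{t:BdM} to the total space.

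The ``moreover'' part is circular and incomplete. Theorem~\ref{t:Yau81} applies only once you already know that $\U_t$ has isolated singularities, which is exactly what must be proved; and the appeal to ``upper semicontinuity of local cohomology in flat families'' is unsubstantiated (flatness of $g$ has not been established, and no such semicontinuity statement is formulated that would exclude positive-dimensional singular loci or non-reduced fibers). Moreover, constancy of $\dim H_\KR^{0,1}(\M_t)$ together with $2$-normality of the total space does not by itself yield depth $\ge 2$ at every point of every fiber; the deduction of normality and isolatedness of the fiber singularities in \cite{HLY06} goes through their Corollary~1.5 and Remark~1.7 (applied for all $\ep_0\in(0,1)$, as the paper's remark indicates) and is a genuinely different argument. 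If you want a self-contained treatment, the honest course is to cite \cite{HLY06} as the paper does, or else reconstruct their proof; the fiberwise Boutet de Monvel--Harvey--Lawson scheme does not close the gap.
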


\begin{rmk}
The first assertion of Theorem~\ref{t:HLY} is a restatement of the first part of
\cite[Main Theorem]{HLY06} including statement (I). Regarding the second assertion,
the fact that $\U_t$ has normal singularities
follows by applying \cite[Corollary~1.5 and Remark~1.7]{HLY06} for all $\ep_0 \in (0,1)$, 
and the fact that the singularities are isolated follows from 
the proof of \cite[Corollary~1.5]{HLY06}.
\end{rmk}

\section{A CR theoretic invariant of singularity}

Let us now focus on the case of a link of a normal isolated singularity.
The goal is to refine the structure of the link by taking
into account the CR structure inherited from the embedding in the complex variety.
By allowing a certain type of deformation of such a structure,
we introduce a finer invariant of the singularity which we
conjecture is sufficient to characterize smooth points among normal isolated singularities.

For reasons that will be clear in the discussion that follows, we
need to be more precise in the definition of link and
consider complex analytic varieties $X$ that are only locally closed
in $\C^N$ (instead of closed in $\C^N$).
For the reminder of this section,
given a locally closed complex analytic variety $X \subset \C^N$,
we simply say that $X$ is a \emph{variety} in $\C^N$.

Let $X \subset \C^N$ be an $n$-dimensional variety
with a normal isolated singularity at the origin $0$.
Consider the function $\r \colon X \cup \de X \to \R$ given by $z \mapsto |z|$.

\begin{defi}
We say that $\a \in (0,\infty)$ is a \emph{critical value} of $\r$ if
$\a = \r(z)$ where either $z \in X_\reg$ is a point where $\r$ is not a submersion,
or $z \in (\Sing X) \cup \de X$. We set
\[
\a^*(X) := \sup \{ \a > 0 \mid \text{ $\a$ is not a critical value of $\r$ } \} \in (0,\infty].
\]
\end{defi}

For every $\ep \in (0,\a^*(X))$, the set
\[
L_{X,\ep} := S^{2N-1}_\ep \cap X
\]
is a link of $X$ (cf.\ \cite[Proposition~(2.4)]{Loo84}).

We consider each link $L_{X,\ep}$
with the CR structure $T_{1,0}(L_{X,\ep})$ induced from the embedding of $L_{X,\ep}$ in $X$.
The following result of Scherk can be viewed as a particular case of Corollary~\ref{c:unique-filling}.

\begin{thm}[\protect{\cite[Theorem~4]{Sch86}}]
Let $X \subset \C^N$ and $X' \subset \C^{N'}$ be two varieties
with normal isolated singularities at the respective origins $0 \in \C^N$ and $0' \in \C^{N'}$.
If for some $0 < \ep < \a^*(X)$ and
$0 < \ep' < \a^*(X')$ the links $L_{X,\ep}$ and $L_{X',\ep'}$
are CR isomorphic, then the germs $(X,0)$ and $(X',0')$ are
analytically equivalent.
\end{thm}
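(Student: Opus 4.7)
The plan is to realize each link as the smooth strongly pseudoconvex boundary of a natural Stein filling that captures the singularity, and then invoke the uniqueness of such fillings provided by Corollary~\ref{c:unique-filling} to upgrade the given CR isomorphism between the links to a biholomorphism between the fillings.

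First, I would observe that the hypothesis $\ep \in (0,\a^*(X))$ guarantees that the sphere $S^{2N-1}_\ep$ is transverse to $X_\reg$ and does not meet $\Sing X = \{0\}$, so that $V := X \cap \ov{B}^{2N}_\ep$ is a compact analytic subvariety of $\C^N$ whose only singular point is the given normal isolated singularity at $0$ and whose smooth boundary is precisely $L_{X,\ep}$. Since $|z|^2$ is strictly plurisubharmonic on $\C^N$ and restricts to a proper plurisubharmonic exhaustion on the interior of $V$, the space $V$ is a Stein space with strongly pseudoconvex boundary $L_{X,\ep}$, and the CR structure that $L_{X,\ep}$ inherits from $X$ coincides with the one it inherits as the boundary of $V$. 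The analogous conclusion holds for $V' := X' \cap \ov{B}^{2N'}_{\ep'}$ with boundary $L_{X',\ep'}$.

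Next, given a CR isomorphism $\f \colon L_{X,\ep} \to L_{X',\ep'}$, I would appeal to Corollary~\ref{c:unique-filling}: when $\dim L_{X,\ep} \ge 5$, up to isomorphism there is a unique Stein space with only normal isolated singularities whose boundary is CR isomorphic to $L_{X,\ep}$. Both $V$ and $V'$ qualify as such fillings, so $\f$ extends to a biholomorphism $\Phi \colon V \to V'$. Since $\Phi$ must carry $\Sing V = \{0\}$ to $\Sing V' = \{0'\}$, restricting $\Phi$ to an arbitrarily small open neighborhood of $0$ yields the required analytic equivalence of germs $(X,0) \cong (X',0')$.

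The main obstacle is the two-dimensional case $n=2$, where $\dim L_{X,\ep} = 3$ and Corollary~\ref{c:unique-filling} does not apply directly, since both Boutet de Monvel's embedding theorem and the Rossi--Tanaka spectrum construction require the boundary to have dimension at least $5$. For normal surface singularities one must instead use an \emph{ad hoc} argument based on the Harvey--Lawson filling in low dimension together with a direct holomorphic extension of the CR isomorphism across the two fillings; this is the content of Scherk's original paper. A secondary, essentially routine, point to verify is that the intrinsic Stein space produced by the Gelfand-spectrum construction underlying Corollary~\ref{c:unique-filling} really coincides with the concrete $V$ constructed above, which follows from the normality of $V$ and the uniqueness clause in that corollary.
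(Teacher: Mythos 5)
Your proposal is correct and follows essentially the same route the paper has in mind: the paper states this result as Scherk's theorem and explicitly remarks that it ``can be viewed as a particular case of Corollary~\ref{c:unique-filling}'', which is exactly the filling-plus-uniqueness argument you spell out (filling each link by $X \cap \ov{B}^{2N}_\ep$ and invoking uniqueness of the normal Stein filling). Your caveat about the case $n=2$, where the corollary's dimension hypothesis fails and one must fall back on Scherk's original argument, is an accurate and appropriate observation.
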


This results says that the CR structure of a link of a normal isolated singularity
determines uniquely the singularity. This does not mean that the CR
structure defines an invariant of the singularity.
The CR structure of a link is not an invariant of the singularity
because it may depend on the radius $\ep$ as well as the embedding $X \subset \C^N$.
An explicit example where it depends on the embedding is given in \cite[Page~401]{Sch86}.

In order to define a CR theoretic invariant of singularity, we
introduce a suitable equivalence relation among compact strongly pseudoconvex CR structures.
Along the lines of \cite[Definition~1.1]{HLY06}, we start with the following definition.

\begin{defi}
\label{d:CR-family}
A \emph{CR deformation family} of relative dimension $2n-1$ is a
strongly pseudoconvex CR manifold $\M$ of dimension $2n+1$ with a
proper $C^{\infty}$ submersion
$f\colon \M \rightarrow \D$, where $\D = \{t \in \C \mid |t| < 1\}$
denotes the open unit interval, such that for any $t \in \C$, the fiber
$\M_t = f^{-1}(t)$ is a strongly pseudoconvex CR manifold with CR structure
$T_{1,0}(\M_t) = T_{1,0}(\M) \cap (T(\M_t) \otimes\C)$.
\end{defi}

\begin{rmk}
It follows by Gray's stability theorem that if $f\colon \M \rightarrow \D$
is a CR deformation family, then for any $s,t \in \D$
the fibers $\M_s$ and $\M_t$, equipped with the induced contact structures
$H(\M_s)$ and $H(\M_t)$, are contactomorphic.
\end{rmk}

\begin{defi}
\label{d:cohomologically CR-family}
A CR deformation family $f\colon \M \rightarrow \D$ as in Definition~\ref{d:CR-family}
is said to be \emph{cohomologically rigid} if, additionally,
the dimensions of the Kohn--Rossi cohomology groups $H_\KR^{p,q}(\M_t)$
are constant as functions of $t \in \D$ for all $p$ and $1 \le q \le n-2$.
\end{defi}

\begin{defi}
\label{d:CR-equiv}
We say that two compact strongly pseudoconvex CR manifolds $(M, T_{1,0}(M))$ and
$(M', T_{1,0}(M'))$ of the same dimension $2n-1$ are \emph{cohomologically CR deformation equivalent}
if there exists a finite collection of cohomologically rigid CR deformation families
$f^i\colon \M^i \rightarrow \D$ of relative dimension $2n-1$, 
indexed by $i \in \{1,\dots,k\}$, and pairs of points $s_i,t_i \in \D$, such that
\begin{enumerate}
\item
$(\M^i_{t_i}, T_{1,0}(\M^i_{t_i})) \simeq (\M^{i+1}_{s_{i+1}}, T_{1,0}(\M^{i+1}_{s_{i+1}}))$ 
for $i \in \{1,\dots,k-1\}$;
\item
$(\M^1_{s_1}, T_{1,0}(\M^1_{s_1})) \simeq (M, T_{1,0}(M))$ and 
$(\M^k_{t_k}, T_{1,0}(\M^k_{t_k})) \simeq (M', T_{1,0}(M'))$.
\end{enumerate}
\end{defi}

It is immediate to see that this definition yields an equivalence relation
among compact strongly pseudoconvex CR manifolds.

\begin{thm}
\label{t:CR}
Suppose that $X \subset \C^N$ and $X' \subset \C^{N'}$ are two
varieties with analytically isomorphic germs of normal isolated singularities $(X,0) \cong (X',0')$
at the respective origins $0 \in \C^N$ and $0' \in \C^{N'}$.
Then for any $0 < \ep < \a^*(X)$ and $0 < \ep' < \a^*(X')$
the links $L_{X,\ep}$ and $L_{X',\ep'}$ are cohomologically CR deformation equivalent.
\end{thm}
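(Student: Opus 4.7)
The strategy is to construct explicit cohomologically rigid CR deformation families by interpolating strictly plurisubharmonic defining functions on a normal variety, and then to chain two or three such families together to connect $L_{X,\ep}$ to $L_{X',\ep'}$. The key flexibility is that a regular level set of any strictly plurisubharmonic function $\rho$ defined on a neighborhood $U$ of $0 \in X$ with $\rho^{-1}(0) = \{0\}$ is a compact strongly pseudoconvex CR hypersurface bounding a Stein neighborhood of $0$ in $X$ whose unique singularity is $0$. This allows us to use the analytic isomorphism $\phi \colon (X,0) \to (X',0')$ to transport the Euclidean defining function on $X'$ to a strictly plurisubharmonic function $z \mapsto |\phi(z)|^2$ on $X$, and then interpolate it with $|z|^2$ within $X$.

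\textbf{Main construction.} Given two strictly plurisubharmonic functions $\rho_0, \rho_1 \colon U \to \R_{\ge 0}$ on a neighborhood $U$ of $0 \in X$ with $\rho_0^{-1}(0) = \rho_1^{-1}(0) = \{0\}$, set $\rho_s := (1-s)\rho_0 + s\rho_1$ for $s \in [0,1]$. Choose a smooth function $\chi \colon \D \to [0,1]$ depending only on $\Re(t)$, with distinguished points $s_1, t_1 \in \D \cap \R$ at which $\chi$ attains the values $0$ and $1$ respectively. For a sufficiently small $c > 0$ and sufficiently large $K > 0$, set
\[
\M := \bigl\{ (z,t) \in U \times \D : \rho_{\chi(t)}(z) + K |t|^2 = c \bigr\}.
\]
Then $f \colon \M \to \D$ is a proper $C^\infty$ submersion whose fiber over $t$ is the regular level set $\{ z : \rho_{\chi(t)}(z) = c - K|t|^2 \}$, a strongly pseudoconvex CR hypersurface of $X$. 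By choosing $K$ large enough, the defining function of $\M$ is strictly plurisubharmonic on $U \times \D$, so that $\M$ inherits a strongly pseudoconvex CR structure from the ambient complex manifold $U_{\mathrm{reg}} \times \D$, making $f$ a CR deformation family in the sense of Definition~\ref{d:CR-family}. Each fiber $\M_t$ bounds the Stein neighborhood $V_t := \{ z \in U : \rho_{\chi(t)}(z) \le c - K|t|^2 \}$ of $0$ in $X$, whose unique singularity is $0$; hence by Theorem~\ref{t:Yau81}, for $1 \le q \le n-2$,
\[
\dim H_\KR^{p,q}(\M_t) = b_0^{p,q+1}(X,0),
\]
a Brieskorn invariant of the germ $(X,0)$ which is independent of $t$. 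Therefore $f$ is cohomologically rigid.

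\textbf{Conclusion and main obstacle.} Applied inside a fixed embedded variety $X$ with $\rho_0(z) = |z|^2$ and $\rho_1(z) = \lambda^2 |z|^2$, the construction produces a cohomologically rigid CR deformation family whose fibers realize the links $L_{X,\ep_1}$ and $L_{X,\ep_2}$ for any admissible radii; applied inside $X$ with $\rho_0(z) = |z|^2$ and $\rho_1(z) = |\phi(z)|^2$, it connects $L_{X, \sqrt{c_0}}$ to the hypersurface $\phi^{-1}(L_{X', \sqrt{c_1}}) \subset X$, which is CR isomorphic via $\phi$ to $L_{X', \sqrt{c_1}}$. Chaining three such families one passes from $L_{X, \ep}$ to $L_{X, \sqrt{c_0}}$ to $L_{X', \sqrt{c_1}}$ to $L_{X', \ep'}$, establishing the desired cohomological CR deformation equivalence. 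The main obstacle is verifying strong pseudoconvexity of $\M$ inside $U \times \D$: one must choose $K$ large enough that the complex Hessian of $\rho_{\chi(t)}(z) + K|t|^2 - c$ remains positive definite after absorbing the mixed $(z,\bar t)$ cross terms arising from the $t$-dependence of $\chi$. Once this is established, cohomological rigidity is immediate from Theorem~\ref{t:Yau81} together with the fact that the Brieskorn invariants depend only on the analytic germ $(X,0) \cong (X',0')$.
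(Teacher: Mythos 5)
Your construction is, at heart, the same as the paper's: interpolating $(1-s)|z|^2+s|\phi(z)|^2$ is nothing but taking the squared distance under the family of embeddings $z\mapsto(\sqrt{1-s}\,z,\sqrt{s}\,\phi(z))$, so you are deforming the embedding and cutting with strictly pseudoconvex level sets, with cohomological rigidity coming from Theorem~\ref{t:Yau81} exactly as in the paper. The difference is in execution, and two points in your write-up are genuine gaps. First, the family $f\colon\M\to\D$ as you define it is not a CR deformation family in the sense of Definition~\ref{d:CR-family}: since $c$ is small and $K$ large, the fiber over $t$ is empty once $K|t|^2>c$, degenerates to the single point $z=0$ when $K|t|^2=c$, and at those parameters $\M$ passes through the singular points $(0,t)$ of $X\times\D$, so $\M$ is not a smooth strongly pseudoconvex CR manifold and $f$ is not a submersion with $(2n-1)$-dimensional CR fibers over all of $\D$. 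This is repairable (restrict to $\{|t|<r\}$ with $r<\sqrt{c/K}$, compress the transition of $\chi$ into that subdisk, and reparametrize), but as written the object you exhibit does not satisfy the definition you need.

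Second, and more substantively, you simply assert that each fiber is ``the regular level set'' $\{\rho_{\chi(t)}=c-K|t|^2\}$. What is actually needed is a uniform statement: a single $c>0$ (and a range of nearby values) that is a regular value of $\rho_s$ on $X_{\mathrm{reg}}\cap U$, with compact sublevel set containing $0$ as its only singular point, \emph{simultaneously for every} $s\in[0,1]$. This is not automatic for an interpolated family of strictly plurisubharmonic functions, and it is precisely the point where the paper invests its effort: the link stability property of Proposition~\ref{p:link-stability} and Corollary~\ref{c:link-stability}, proved via Whitney's condition~({\it b}) (and for which the authors note they could find no reference). Without this, properness of $f$, smoothness of the fibers, and the existence of the Stein fillings $V_t$ to which you apply Theorem~\ref{t:Yau81} are all unjustified. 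By contrast, the Hessian estimate you flag as ``the main obstacle'' (absorbing the mixed $(z,\bar t)$ terms by a large $K$) is routine; note the paper avoids even that by cutting the deformed embedding with genuine spheres in a larger affine space, so that the total space is automatically an open piece of the strictly pseudoconvex boundary of a Stein domain. Supply the uniform regular-value argument (or reduce to the paper's embedded picture and quote Corollary~\ref{c:link-stability}) and fix the base of the family, and your proof goes through.
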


Before we can prove the theorem, we need a stability property of links of isotrivial
families of singularities, which is stated below in Corollary~\ref{c:link-stability}.
As we were unable to find a reference
in the literature, we include here a proof which we learned from Looijenga.

Let $T \subset \C^m$ be a smooth variety, and let
$\X \subset \C^N \times T$ be a family of varieties $\X_t \subset \C^N$ parameterized by
$T$. We assume that each $\X_t$ has an isolated singularity at $0 \in \C^N$
and is smooth elsewhere.
Let $\a^*(\X_t)$ be defined with respect to this embedding.
We identify $T$ with the section $\{0\} \times T \subset \X$.

\begin{defi}
The family $\X \to T$ is said to satisfy the \emph{link stability property} if
every point $t_0 \in T$ admits an analytic open neighborhood $U \subset T$ such that
$\a^*(\X_t) > \ep$ for some $\ep > 0$ and every $t \in U$.
\end{defi}

\begin{prop}
\label{p:link-stability}
With the above notation, assume that the strata $\X \setminus T$ and $T$ of $\X$
satisfy Whitney's condition~({\it b}) in $\C^N \times \C^m$. 
Then the family $\X \to T$ satisfies the link stability property.
\end{prop}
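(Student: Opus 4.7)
The plan is to argue by contradiction, extracting from a putative failure of the link stability property a sequence of critical points whose limiting geometry is ruled out by the Whitney regularity hypothesis. Suppose the conclusion fails at some $t_0 \in T$. A diagonal argument produces a sequence $(z_n, t_n) \in \X \setminus T$ with $t_n \to t_0$, $z_n \to 0$, $z_n \neq 0$, and $z_n$ a critical point of $\r|_{\X_{t_n}}$. By compactness of the unit sphere in $\C^N$ and of the relevant Grassmannians, I pass to subsequences so that $v_n := z_n / |z_n|$ converges to some $v_0$ with $|v_0|=1$, the real $2n$-dimensional tangent spaces $\sigma_n := T_{z_n} \X_{t_n}$ converge to a subspace $\sigma \subset \R^{2N}$, and the real $(2n+2m)$-dimensional tangent spaces $\tau_n := T_{(z_n, t_n)}(\X \setminus T)$ converge to a subspace $\tau \subset \R^{2N+2m}$.

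The critical point condition means that $z_n$ is Euclidean-orthogonal to $\sigma_n$ in $\R^{2N}$; passing to the limit after dividing by $|z_n|$ yields $v_0 \perp \sigma$. Next, Whitney's condition (b) applied with the auxiliary sequence $y_n := (0, t_n) \in T$ shows that the normalized secants $(z_n, 0)/|z_n| \to (v_0, 0)$ lie in $\tau$ in the limit, so $(v_0, 0) \in \tau$. Whitney's condition (a), which follows from (b), gives in addition $\{0\} \times \C^m = T_{(0,t_0)}T \subset \tau$.

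The contradiction then comes from a dimension count. Since $\{0\} \times \C^m \subset \tau$, the projection $\tau \to \C^m$ is surjective, equivalently $\tau + (\C^N \times \{0\}) = \R^{2N+2m}$, so $\dim_\R\bigl(\tau \cap (\C^N \times \{0\})\bigr) = 2n$. The inclusion $\sigma \times \{0\} \subset \tau \cap (\C^N \times \{0\})$ of two $2n$-dimensional real subspaces is then forced to be an equality, and combining with $(v_0, 0) \in \tau \cap (\C^N \times \{0\})$ gives $v_0 \in \sigma$, contradicting $v_0 \perp \sigma$ and $|v_0|=1$. The delicate point, which I expect to be the main obstacle, is precisely this third step: without Whitney's condition (a) forcing surjectivity of $\tau \to \C^m$, the intersection $\tau \cap (\C^N \times \{0\})$ could strictly contain $\sigma \times \{0\}$, and the Whitney (b) containment $(v_0, 0) \in \tau$ would fail to descend to a contradiction with the criticality of $z_n$.
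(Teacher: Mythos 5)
Your proof is correct and takes essentially the same route as the paper: a sequence of critical points accumulating on $T$, limits of secant directions and tangent planes, and Whitney's condition~({\it b}) forcing the limiting secant direction into the limiting tangent plane, with the final contradiction via condition~({\it a}) and a dimension count (a step the paper leaves implicit in its ``this implies'' conclusion). The only cosmetic slip is the identification $T_{(0,t_0)}T=\{0\}\times\C^m$, which should read $\{0\}\times T_{t_0}T$ since $T\subset\C^m$ need not be $m$-dimensional; replacing $\C^m$ by $T_{t_0}T$ throughout leaves your dimension count unchanged.
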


\begin{proof}
Consider the function $r \colon \X \to \R \times T$ given by $(z,t) \mapsto (|z|,t)$,
where $z$ are the coordinates of $\C^N$ and $t \in T$.
We need to prove that the set of critical points of $r$ in $\X \setminus T$
has no accumulation point in $T$.
Let $(z_i,t_i)$ be a sequence of points in $\X \setminus T$ converging to a
point $(0,t) \in T$. For every $i$, let $\ell_i \subset \C^N \times \C^m$ be the real line
spanned by the vector $z_i$ in $\C^N \times \{t_i\}$,
and let $\t_i \subset \C^N \times \C^m$ be the real tangent space of $\X$ at $(z_i,t_i)$.
Whitney's condition~({\it b}) tells us that
if the sequence of lines $\ell_i$ has limit $\ell$ and the sequence planes $\t_i$ has
limit $\t$, then $\ell \subset \t$. This implies that the restriction of $r$ to
a sufficiently small analytic neighborhood of $(0,t)$ has no critical points in $\X \setminus T$.
\end{proof}

\begin{cor}
\label{c:link-stability}
Let $X$ be a variety with an isolated singularity at a point $x_0$ and smooth elsewhere.
Let $T \subset \C^m$ be a smooth variety, and let $\f \colon X \times T \inj \C^N \times T$ be a locally
closed embedding that respects the projections to $T$ and maps $\{x_0\} \times T$ to $\{0\} \times T$.
Denote by $\X$ the image of $\f$.
Note that each fiber $\X_t$ of $\X \to T$ is a variety in $\C^N$ with an isolated singularity at $0$;
let $\a^*(\X_t)$ be defined with respect to this embedding.
Then $\X \to T$ satisfies the link stability property.
\end{cor}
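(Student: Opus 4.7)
The plan is to apply Proposition~\ref{p:link-stability}: it suffices to verify that the stratification $(\X \setminus T, T)$ of $\X$, where $T$ is identified with $\{0\} \times T$ via $\f$, satisfies Whitney's condition~(b) in $\C^N \times \C^m$. The whole argument hinges on the isotrivial structure coming from $\f$.

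Write $\f(x,t) = (\Phi(x,t), t)$. The hypothesis $\f(\{x_0\} \times T) = \{0\} \times T$ forces $\Phi(x_0, \cdot) \equiv 0$, so $\partial_t \Phi$ vanishes identically along $\{x_0\} \times T$. I would fix $(0, t_\infty) \in T$ and take sequences $p_i = \f(x_i, t_i) \in \X \setminus T$ and $q_i = (0, s_i) \in T$ both converging to $(0, t_\infty)$, extracting subsequences so that $T_{p_i}\X \to \tau$ in the Grassmannian and $(p_i - q_i)/|p_i - q_i| \to (v, u) \in \C^N \times \C^m$. Using the formula $d\f_{(x_i, t_i)}(v', w) = (\partial_x \Phi \cdot v' + \partial_t \Phi \cdot w, w)$ together with $\partial_t \Phi|_{(x_0, t_\infty)} = 0$, one checks that $\tau$ decomposes as $\tau_0 \oplus (\{0\} \times T_{t_\infty} T)$, where $\tau_0 \subset \C^N$ (identified with $\C^N \times \{0\}$) is the limit of $d_x \Phi|_{(x_i, t_i)}(T_{x_i} X)$. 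A continuity estimate in $t$, using $d_x \Phi|_{(x_i, t_i)} = d_x \Phi|_{(x_i, t_\infty)} + O(|t_i - t_\infty|)$, further identifies $\tau_0$ with a limit tangent plane of the fixed embedded variety $\Phi_{t_\infty}(X) \subset \C^N$ at the origin. Since $(0, u)$ automatically lies in $\{0\} \times T_{t_\infty} T \subset \tau$, Whitney~(b) reduces to showing $v \in \tau_0$, which is trivial when $v = 0$ and otherwise amounts to checking that $\lim \Phi(x_i, t_i)/|\Phi(x_i, t_i)| \in \tau_0$.

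The main obstacle is this last step: transferring the assertion $v \in \tau_0$ to a statement purely about the fixed embedding $\Phi_{t_\infty}$. Using the vanishing $\partial_t \Phi|_{\{x_0\} \times T} \equiv 0$ together with a Lojasiewicz-type comparison between $|\partial_t \Phi(x, \cdot)|$ and $|\Phi_{t_\infty}(x)|$ near $x_0$, I would show that $\Phi(x_i, t_i)$ and $\Phi_{t_\infty}(x_i)$ define the same limit direction in $\C^N$. The remaining assertion then reduces to Whitney's condition~(b) for the isolated singularity of the complex analytic variety $\Phi_{t_\infty}(X) \subset \C^N$ at $0$, which holds classically: every complex analytic variety admits a Whitney stratification, and since the singular stratum of an isolated singularity cannot be refined, its canonical stratification is automatically Whitney.
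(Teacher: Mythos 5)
Your overall strategy is viable and genuinely different from the paper's. The paper never computes limits: it observes that $(X\setminus\{x_0\},\{x_0\})$ satisfies condition~(b) in any embedding, that taking the product with the smooth factor $T$ preserves condition~(b), and then invokes the fact that condition~(b) is intrinsic, i.e.\ independent of the local embedding (the cited Section~4 of Mather's notes), to transfer the condition to the pair $(\X\setminus T, T)$; Proposition~\ref{p:link-stability} then concludes. You instead re-derive this invariance by hand in the product situation, reducing condition~(b) for $(\X\setminus T,T)$ to the classical fact for the single fiber $\X_{t_\infty}$; the splitting $\tau=\tau_0\oplus(\{0\}\times T_{t_\infty}T)$ and the reduction to $v\in\tau_0$ are correct.

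However, the two steps you leave as ``I would show'' are exactly where the content lies, and as written they are gaps. First, closeness of $d_x\Phi(x_i,t_i)$ and $d_x\Phi(x_i,t_\infty)$ in operator norm does not by itself force their images to have the same limit in the Grassmannian: if the injectivity modulus of $d_x\Phi_t$ on $T_{x_i}X$ degenerated as $x_i\to x_0$, an $O(|t_i-t_\infty|)$ perturbation could rotate the image plane by a definite amount. You need a uniform bound $|d_x\Phi_t(v')|\ge c\,|v'|$ for $v'\in T_{x_i}X$ with $x_i$ near $x_0$ and $t$ near $t_\infty$; it holds because $\f$ is an embedding, so $\f^{-1}$ is holomorphic on $\X$ and extends locally to an ambient holomorphic, hence Lipschitz, map—but this must be said. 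Second, a ``Lojasiewicz-type'' comparison with an unspecified exponent is not enough: the sequences $x_i\to x_0$ and $t_i\to t_\infty$ are independent, so $|\Phi_{t_\infty}(x_i)|$ may tend to $0$ arbitrarily fast compared with $|t_i-t_\infty|$, and an estimate $|\partial_t\Phi|\le C\,|\Phi_{t_\infty}|^{\a}$ with $\a<1$ does not yield $|\Phi(x_i,t_i)-\Phi_{t_\infty}(x_i)|=o(|\Phi_{t_\infty}(x_i)|)$. What you need is the linear comparison $\sup_s|\partial_t\Phi(x,s)|\le C\,|\Phi_{t_\infty}(x)|$, which does hold, again by two Lipschitz estimates furnished by the embedding hypothesis: $|\partial_t\Phi(x,s)|\le C\,d(x,x_0)$ (extend $\partial_t\Phi$, which vanishes on $\{x_0\}\times T$, to an ambient holomorphic map) and $d(x,x_0)\le C'\,|\Phi_{t_\infty}(x)|$ (extend $\Phi_{t_\infty}^{-1}$ ambiently). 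With these inserted, your argument correctly reduces to condition~(b) for $(\X_{t_\infty}\setminus\{0\},\{0\})$, which holds because the locus where (b) fails is a proper analytic subset of the point stratum (your appeal to the existence of a Whitney stratification needs this extra word, since a Whitney stratification might a priori refine the regular part); this is the same classical input the paper uses, the difference being that the paper bypasses all of the estimates by citing the embedding-invariance of condition~(b).
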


\begin{proof}
Since $X \setminus \{x_0\}$ is smooth,
$(X \setminus \{x_0\})$ and $\{x_0\}$
satisfy Whitney's condition~({\it b}) with respect to any given embedding $X \subset \C^k$,
and hence $(X \setminus \{x_0\}) \times T$ and $\{x_0\} \times T$
satisfy Whitney's condition~({\it b}) in $\C^k \times \C^m$.
This is an intrinsic property of the variety
(e.g., see \cite[Section~4]{Mat12}), and therefore the strata
$\X \setminus T$ and $T$ satisfy Whitney's condition~({\it b}).
Here, as before, we identify $T$ with the section $\{0\} \times T$. 
Then the property follows by Proposition~\ref{p:link-stability}.
\end{proof}

We now turn to the proof of Theorem~\ref{t:CR}.
The next two lemmas deal with two special cases of the theorem.

\begin{lem}
\label{l:CR-1}
Let $X \subset \C^N$ be a variety with a normal isolated singularity at $0$.
Then for every $0 < \ep' \le \ep < \a^*(X)$ the links $L_{X,\ep}$ and $L_{X,\ep'}$
are cohomologically CR deformation equivalent.
\end{lem}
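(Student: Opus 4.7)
The plan is to construct a single cohomologically rigid CR deformation family $f \colon \M \to \D$ two of whose fibers are the given links $L_{X,\ep'}$ and $L_{X,\ep}$; the conclusion then follows directly from Definition~\ref{d:CR-equiv} with $k=1$. The case $\ep = \ep'$ is trivial, so I assume $\ep' < \ep$.

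To set up the family, choose a smooth function $r \colon \D \to (0,\a^*(X))$ depending only on $\Re t$, strictly log-concave as a function of $\Re t$, and satisfying $r(s_0) = \ep'$ and $r(t_0) = \ep$ at some $s_0,t_0 \in \D$; a scaled Gaussian $r(t) = A\exp(-B(\Re t)^2)$ with $\max(\ep,\ep') < A < \a^*(X)$ and $B > 0$ sufficiently large will do. Set
\[
\M := \bigl\{ (z,t) \in (X\setminus\{0\})\times\D : |z|^2 = r(t)^2 \bigr\},
\]
with $f$ the second-factor projection. Then $\M_t$ is canonically identified with the link $L_{X,r(t)}$, and $f$ is a proper $C^\infty$ submersion because $r$ is bounded away from $0$ and $\a^*(X)$ on compact subsets of $\D$. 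Equip $\M$ with the CR structure $T_{1,0}(\M) := T^{1,0}((X\setminus\{0\})\times\D) \cap (T(\M)\otimes\C)$ inherited from the ambient smooth complex manifold; a short check shows it restricts on each fiber to the standard CR structure of $L_{X,r(t)}$.

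The principal step is verifying that $\M$ is strongly pseudoconvex. Using $\rho(z,t) = |z|^2 - r(t)^2$ as defining function, the tangency condition for $V = \sum a_i\partial_{z_i} + b\,\partial_t \in T^{1,0}$ to lie in $T_{1,0}(\M)$ reads $\sum a_i\ov z_i = \tfrac12 b\,(r^2)'(\Re t)$. Substituting this into $i\partial\ov\partial\rho(V,\ov V) = |a|^2 - \tfrac14(r^2)''(\Re t)|b|^2$ and applying Cauchy--Schwarz together with $|z|^2 = r^2$ on $\M$ yields
\[
i\partial\ov\partial\rho(V,\ov V) \,\ge\, -\tfrac{r^2}{4}(\log r^2)''(\Re t)\,|b|^2,
\]
with an additional strictly positive contribution whenever $a$ is not a scalar multiple of $\ov z$. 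Strict log-concavity of $r$ therefore makes the Levi form strictly positive definite on $T_{1,0}(\M)$.

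For cohomological rigidity, each fiber $\M_t = L_{X,r(t)}$ bounds the Stein space $V_t = \{z \in X : |z|\le r(t)\}$ whose only singularity is the germ $(X,0)$, independently of $t$. When $n \ge 3$, Theorem~\ref{t:Yau81} expresses $\dim H_\KR^{p,q}(\M_t)$ for $1 \le q \le n-2$ as a sum of Brieskorn invariants $b^{p,q+1}_0$ of $(X,0)$, which depend only on the analytic germ and are hence constant in $t$; when $n \le 2$ the range is empty and rigidity is automatic. Invoking Definition~\ref{d:CR-equiv} with the single family $f$ and the points $s_0,t_0$ completes the argument. The main technical obstacle is the Levi form computation together with arranging strict log-concavity of $r$; every remaining step follows either by explicit construction or by direct application of Theorem~\ref{t:Yau81}.
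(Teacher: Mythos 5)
Your proposal is correct and follows essentially the same route as the paper: a single cohomologically rigid CR deformation family whose fibers are links of $(X,0)$ at varying radii, with rigidity supplied by Theorem~\ref{t:Yau81} because every fiber bounds a Stein space whose only singularity is the fixed germ $(X,0)$. The only real difference is implementational: the paper takes the total space to be part of the boundary of the Stein space $(X\times\C)\cap B^{2N+2}_\ep$ (radius profile $\sqrt{\ep^2-|t|^2}$), so strong pseudoconvexity is automatic, whereas you verify it by a direct Levi form computation for a strictly log-concave profile $r(\Re t)$.
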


\begin{proof}
Let $\X = X \times \C \subset \C^{N+1}$ and $h \colon \X \to \C$ be the projection onto the last factor.
Then let $\M = S_\ep^{2N+1} \cap h^{-1}(\D_\ep)$ where $\D_\ep \subset \C$ is the open disk of radius $\ep$,
and let $f \colon \M \to \C$ be the composition of $h|_{\M}$ with an isomorphism 
$g\colon \D_\ep \to \D$. Note that $\M$ is an open
subset of the boundary of a Stein manifold, and in particular it is
a strongly pseudoconvex CR manifold. 
Moreover, we have $\M_s = L_{X,\ep}$ and $\M_t = L_{X,\ep'}$ when 
$s = g(0)$ and $t = g\big(\sqrt{\ep^2 - (\ep')^2}\big)$.
Every fiber $\M_t$ is a link of a smooth point, and hence is strongly pseudoconvex CR manifold,
and the dimensions of the groups $H^{p,q}_\KR(\M_t)$ are constant for $1 \le q \le n-2$ by
Theorem~\ref{t:Yau81}.
Then $f$ is a cohomologically rigid CR deformation family connecting $L_{X,\ep}$ to $L_{X,\ep'}$.
\end{proof}

\begin{lem}
\label{l:CR-2}
Let $X$ be a complex variety with a normal isolated singularity at a point $x_0$.
Let $\f \colon X \times \C \inj \C^N \times \C$ be a locally
closed embedding that respects the projections to $\C$ and maps $\{x_0\} \times \C$ to $\{0\} \times \C$.
Denote by $\X$ the image of $\f$.
Then, for sufficiently small $\ep > 0$, the links $L_{\X_0,\ep}$ and $L_{\X_1,\ep}$
(defined with respect to the corresponding embeddings $\X_t \subset \C^N$)
are cohomologically CR deformation equivalent.
\end{lem}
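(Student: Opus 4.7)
The plan is to construct a single cohomologically rigid CR deformation family $f\colon \M \to \D$ that directly connects $L_{\X_0,\ep}$ and $L_{\X_1,\ep}$ for some sufficiently small $\ep > 0$. The key idea is that, for an auxiliary small parameter $\delta > 0$, the function
\[
\psi(z,t) := |z|^2 + \delta\,|t - \tfrac12|^2
\]
is strictly plurisubharmonic on $\C^N \times \C$, and remains so when restricted to the complex submanifold $\X_{\reg}$, so its level sets $\{\psi = c\} \cap \X_{\reg}$ are strongly pseudoconvex real hypersurfaces there. The symmetric placement of the center $t = \tfrac12$ guarantees that the fibers over $t = 0$ and $t = 1$ are links of the \emph{same} radius $\sqrt{c - \delta/4}$, so that no separate radius-matching step via Lemma~\ref{l:CR-1} is required.

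Concretely, I would first apply Corollary~\ref{c:link-stability} to obtain $\ep_0 > 0$ and an open neighborhood $W \subset \C$ of $[0,1]$ such that $\ep_0 < \a^*(\X_t)$ for every $t \in W$. Then I would choose $c, \delta > 0$ with $\sqrt c < \ep_0$ and $\delta < 4c$ (so that $[0,1] \subset U := \{|t - \tfrac12| < \sqrt{c/\delta}\} \subset W$), put $\ep := \sqrt{c - \delta/4}$, and define
\[
\M := \{(z,t) \in \X : \psi(z,t) = c,\ t \in U\},
\]
with $f\colon \M \to \D$ the composition of $(z,t) \mapsto t$ with a biholomorphism $g^{-1}\colon U \to \D$. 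Because $dt$ vanishes along each slice $\X_t$, a direct computation identifies $\M_\tau$ with the link $L_{\X_{g(\tau)},\,\sqrt{c - \delta|g(\tau) - \tfrac12|^2}}$ together with its standard CR structure inherited from $\X_{g(\tau)} \subset \C^N$; choosing $\tau_j := g^{-1}(j)$ yields $\M_{\tau_0} = L_{\X_0,\ep}$ and $\M_{\tau_1} = L_{\X_1,\ep}$.

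The verification of the requirements of Definition~\ref{d:CR-family} (plus cohomological rigidity) then breaks into four checks. Strong pseudoconvexity of $\M$ follows from strict plurisubharmonicity of $\psi|_{\X_\reg}$, whose Levi form is $\sum dz_i \wedge d\bar z_i + \delta\, dt \wedge d\bar t$ restricted to $T^{1,0}(\X_\reg)$, and each fiber $\M_\tau$ is strongly pseudoconvex as the link of a normal isolated singularity. The map $f$ is a proper $C^\infty$ submersion: smoothness of $\M$ follows from choosing $c$ to be a regular value of $\psi|_{\X_\reg}$ (arrangeable by Sard if necessary), and properness from compactness of each link together with the uniform bound $\sqrt{c - \delta|t-\tfrac12|^2} \le \sqrt c < \ep_0$ across $\overline U$. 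Finally, cohomological rigidity comes from Theorem~\ref{t:Yau81}: every $\M_\tau$ bounds a Stein neighborhood of the isolated singularity $0 \in \X_{g(\tau)}$, and by the isotriviality assumption all such germs are analytically isomorphic to $(X, x_0)$, so the Brieskorn invariants $b^{p,q+1}_{x_0}$, and hence $\dim H_\KR^{p,q}(\M_\tau)$ for $1 \le q \le n-2$, are constant in $\tau$.

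The main obstacle is reconciling total-space strong pseudoconvexity with the correct fiberwise CR structure: the naive choice $\psi_0 := |z|^2$ would give the correct link on every fiber but leaves the Levi form degenerate in the transverse $t$-direction, so the total space would fail to be strongly pseudoconvex. The purpose of the auxiliary term $\delta|t-\tfrac12|^2$ is precisely to supply the missing transverse positivity while leaving the fiberwise geometry untouched, since $d|t-\tfrac12|^2$ vanishes along each $\X_t$. A secondary concern, that the locally closed nature of the embedding $\f$ might obstruct properness, is defused by the link-stability bound. Once these checks are complete, the single family $f\colon \M \to \D$ realizes the required cohomological CR deformation equivalence with $k = 1$ in Definition~\ref{d:CR-equiv}.
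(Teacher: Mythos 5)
Your construction is correct in substance, and it takes a genuinely different route from the paper. The paper's proof chains together finitely many families cut out by round spheres $\|(z,t)-(0,t_i)\|=\ep$ centered at points $0=t_0<\dots<t_m=1$, each of which only covers a parameter disk of radius $\ep$; gluing consecutive families per Definition~\ref{d:CR-equiv} then implicitly also uses radius adjustments in the spirit of Lemma~\ref{l:CR-1}, since the fibers of $\M^i$ and $\M^{i+1}$ over a common parameter value are links of the same $\X_t$ but of different radii. You instead take a single level set of the weighted strictly plurisubharmonic function $|z|^2+\delta|t-\tfrac12|^2$: the small transverse weight $\delta$ supplies exactly the missing positivity in the $t$-direction while leaving each slice's geometry untouched, one family spans the whole interval $[0,1]$, and the symmetric centering makes the two endpoint fibers links of the same radius $\sqrt{c-\delta/4}$, so $k=1$ suffices and Lemma~\ref{l:CR-1} is not needed in this step. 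The remaining ingredients coincide with the paper's: Corollary~\ref{c:link-stability} for a uniform lower bound on $\a^*(\X_t)$, strong pseudoconvexity of level sets of strictly plurisubharmonic functions restricted to $\X_\reg$, the identification of the induced CR structure on each fiber with the link CR structure of $\X_t\subset\C^N$, and Theorem~\ref{t:Yau81} together with isotriviality for cohomological rigidity. Net effect: a cleaner, single-family argument that proves the same statement.

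Two small repairs. First, the parenthetical ``so that $[0,1]\subset U\subset W$'' is not justified: $\delta<4c$ gives $[0,1]\subset U$, but $U$ is a disk of radius greater than $\tfrac12$ centered at $\tfrac12$, whereas $W$, produced by covering only $[0,1]$, may be an arbitrarily thin neighborhood, so $U\subset W$ can fail. Fix the order of quantifiers as the paper does: fix a bounded open set containing $[0,1]$ in advance (say $\{|t-\tfrac12|<1\}$), apply Corollary~\ref{c:link-stability} and compactness of its closure to get $\ep_0$ there (the corollary applies at every $t\in\C$, since each $\X_t$ is an isomorphic copy of $X$), and only then choose $c,\delta$ with $\sqrt c<\ep_0$ and $c\le\delta<4c$, so the parameter disk has radius in $(\tfrac12,1]$. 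Second, no Sard-type argument is available or needed to make $c$ a regular value: since $r(t)\le\sqrt c<\ep_0<\a^*(\X_t)$, the radius function on each slice is noncritical along the fiber, which already forces $d(\psi|_{\X_\reg})\ne 0$ along $\M$ and shows directly that $f$ is a submersion; this matters because your radius bookkeeping requires the specific level $c$, not a generic one.
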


\begin{proof}
Let $U \subset \C$ be a bounded open set containing the real interval $[0,1]$.
By Corollary~\ref{c:link-stability} and the compactness of the closure of $U$ in $\C$,
there exists an $\ep > 0$ such that $\a^*(\X_t) > \ep$ for every $t \in U$.
We fix real numbers
\[
0 = t_0 < t_1 < \dots < t_{m-1} < t_m = 1
\]
such that $t_i - t_{i-1} \le \ep$ for all $i$.
For every $i$, let
\begin{align*}
\D_i &= \{t \in \C \mid |t-t_i| < \ep\}, \\
S_i &= \{(z,t) \in \C^N\times \C \mid \|(z,t) - (0,t_i)\| = \ep\}.
\end{align*}
Note that $[0,1] \subset \bigcup_{i=0}^m \D_i$.
Without loss of generality, we can assume that $\bigcup_{i=0}^m \D_i \subset U$.
Denoting by $h \colon \X \to \C$ the projection map, we define
\[
\M^i := S_i \cap h^{-1}(\D_i),
\]
and let $f^i \colon \M^i \to \D_i$ be the map induced by $h$.
By construction, $\M^i$ is a strongly pseudoconvex CR manifold. 
For every $i$ and every $t \in \D_i$, the fiber $\M^i_t$ is a link of $\X_t$, 
and hence it is a strongly pseudoconvex CR manifold.
Moreover, the dimensions of the groups $H^{p,q}_\KR(\M_t)$ are constant for $1 \le q \le n-2$ by
Theorem~\ref{t:Yau81}.
After composing with isomorphisms $\D_i \to \D$,
we obtain a finite collection of cohomologically rigid CR deformation families
as in Definition~\ref{d:CR-equiv}, connecting
$L_{\X_0,\ep}$ to $L_{\X_1,\ep}$.
\end{proof}

\begin{proof}[Proof of Theorem~\ref{t:CR}]
After replacing $X$ and $X'$ with analytic open
neighborhoods of the respective origins,
we may assume that $X = X'$ and we are given two locally closed embeddings
$j \colon X \inj \C^N$ and $j' \colon X \inj \C^{N'}$.
For every $t \in \C$, let $j_t \colon X \to \C^N$ denote the composition
of $j$ with the rescaling map $\r_t \colon \C^N \to \C^N$ sending $z \mapsto tz$. Define
$j'_t \colon X' \inj \C^{N'}$ in a similar way.

Consider the locally closed embedding
\[
\f \colon X \times \C \inj \C^N \times \C^{N'} \times \C, \quad \f(x,t) = (j_{1-t}(x),j'_t(x),t).
\]
Let $\X$ denote the image of $\f$, and let
$\X \to \C$ be the projection onto the last factor.
For every $t \in \C$, $\X_t$ is an isomorphic image of $X$ in $\C^N \times \C^{N'}$.
The inclusion $\X_0 \subset \C^N \times \{0'\}$ is naturally identified with $j$, and
the inclusion $\X_1 \subset \{0\} \times \C^{N'}$ with $j'$.
Going back to the notation of the statement of the theorem,
this means that there are natural identifications
$\X_0 = X \subset \C^N$ and $\X_1 = X' \subset \C^{N'}$.
Then the assertion follows from Lemmas~\ref{l:CR-1} and~\ref{l:CR-2}.
\end{proof}

We are interested in the following immediate consequence of Theorem~\ref{t:CR}.

\begin{cor}
The cohomological CR deformation equivalence class of a link
of an isolated singularity of a complex variety $X$
is an analytic invariant of the germ of the singularity.
\end{cor}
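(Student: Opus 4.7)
The plan is to derive this corollary as an essentially formal consequence of Theorem~\ref{t:CR}. First I need to interpret the statement properly: given an isolated singularity $x_0 \in X$ of a complex variety $X$, to construct an equivalence class of a link one must choose (i) a local analytic embedding of a neighborhood of $x_0$ into some $\C^N$ sending $x_0$ to the origin, and (ii) a radius $\ep \in (0, \a^*(X))$ for the resulting embedded variety. The link $L_{X,\ep}$ then carries a CR structure inherited from $\C^N$, and one takes its cohomological CR deformation equivalence class.

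To show that this class is an analytic invariant of the germ $(X, x_0)$, I would verify two things: that the class is independent of the choices (i) and (ii), and that analytically isomorphic germs produce the same class. Both assertions follow directly from Theorem~\ref{t:CR}. Indeed, that theorem is already phrased with the necessary generality: it asserts that for any pair of varieties $X \subset \C^N$ and $X' \subset \C^{N'}$ with analytically isomorphic germs of normal isolated singularities at the origins, and for any admissible radii $\ep \in (0,\a^*(X))$ and $\ep' \in (0,\a^*(X'))$, the links $L_{X,\ep}$ and $L_{X',\ep'}$ are cohomologically CR deformation equivalent. Specializing to $X=X'$ (with the identity germ isomorphism) gives independence of the choice of embedding and of radius; the general case yields invariance under analytic isomorphism of germs.

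There is no substantive obstacle in this argument; once Theorem~\ref{t:CR} is established and the equivalence relation of Definition~\ref{d:CR-equiv} is verified to be transitive (which follows from concatenating finite chains of cohomologically rigid CR deformation families), the corollary is tautological. The only technical point worth mentioning in the write-up is that any isolated singularity on a complex analytic variety admits a local embedding into some $\C^N$, so the class under consideration is indeed constructible from an abstract germ, but this is standard and causes no difficulty.
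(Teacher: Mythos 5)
Your argument is correct and matches the paper, which presents this corollary as an immediate consequence of Theorem~\ref{t:CR}: the theorem already quantifies over all embeddings and all admissible radii for analytically isomorphic germs, so independence of the choices and invariance under germ isomorphism follow at once. Your additional remarks on transitivity of the equivalence relation and local embeddability are fine but not needed beyond what the paper takes for granted.
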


If $X = \C^n \subset \C^N$ is a linear subspace, then for every $\ep > 0$
the link $L_{X,\ep}$ is CR isomorphic to $(S^{2n-1}, T_{1,0}^\st)$, where
$T_{1,0}^\st := T^{1,0}(\C^n) \cap (T(S^{2n-1}) \otimes \C)$ is
the \emph{standard CR structure} of $S^{2n-1}$.

If $X \subset \C^N$ is an $n$-dimensional variety passing through the origin
that is smooth but not linear at $0$, then a link
$L_{X,\ep}$ of $X$ at $0$ may not be CR isomorphic to the standard CR sphere.
Nonetheless, it is cohomologically CR deformation equivalent to it.

We expect that this property characterizes smoothness among normal isolated singularities.

\begin{conj}
\label{c:CR}
Let $X \subset \C^N$ be a variety with a normal isolated singularity at $0$.
Then $X$ is smooth at $0$ if and only if
the cohomological CR deformation equivalence class of the link is the class of the
standard CR sphere $(S^{2n-1},T_{1,0}^\st)$.
\end{conj}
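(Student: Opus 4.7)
A natural strategy for proving Conjecture~\ref{c:CR} is as follows. The forward direction is essentially contained in the discussion preceding the conjecture: if $X$ is smooth at $0$, then after a local analytic change of coordinates the germ $(X,0)$ becomes analytically isomorphic to a linearly embedded germ $(\C^n, 0) \subset \C^N$, whose link is CR isomorphic to the standard sphere $(S^{2n-1}, T_{1,0}^\st)$, and Theorem~\ref{t:CR} then shows that the cohomological CR deformation equivalence class of the link of $X$ agrees with that of the standard CR sphere, independently of the original embedding.

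For the reverse direction, assume $n \ge 3$ (the cases $n = 1, 2$ being either vacuous or covered by Mumford's theorem via the underlying diffeomorphism type of the link). The plan is to transport smoothness along the chain of cohomologically rigid CR deformation families witnessing the equivalence. Given such a chain $f^i \colon \M^i \to \D$, $i = 1, \dots, k$, connecting $L_{X, \ep}$ to the standard sphere, I would apply Theorem~\ref{t:HLY} to each $f^i$ to produce a chain of holomorphic families $g^i \colon \U^i \to \D$ of $2$-normal Stein spaces with only normal isolated singular fibers, whose strongly pseudoconvex boundaries recover the $\M^i$. By Corollary~\ref{c:unique-filling}, the germ of the Stein filling at each node of the chain is uniquely determined by its CR boundary, so the chain of CR equivalences descends to a sequence of flat deformations of germs of normal isolated singularities, terminating at the germ $(X, 0)$ on one side and at the smooth germ $(\C^n, 0)$ on the other.

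To extract constraints on $(X, 0)$, I would invoke Theorem~\ref{t:Yau81}. In each cohomologically rigid family $\U^i \to \D$, the dimensions of $H^{p,q}_\KR(\M^i_t)$ are constant in $t$ by hypothesis, and Yau's formula then forces the total Brieskorn invariants $\sum_{x \in \Sing \U^i_t} b^{p,q+1}_x$ to be constant in $t$ for $1 \le q \le n-2$. Propagating this constancy from the smooth endpoint $(\C^n, 0)$, where all Brieskorn invariants vanish, back through the chain yields $b^{p,q}_{0}(X) = 0$ for all $p$ and all $1 \le q \le n-1$.

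The hard part will be to conclude that the vanishing of these Brieskorn invariants forces $(X, 0)$ to be smooth. This is essentially the content of the complex Plateau problem mentioned in the introduction, which has been resolved for isolated hypersurface singularities in \cite{Yau81, LY07, DY12} and partially addressed in \cite{DGY16}, but remains open in the generality required here. A complete proof of the conjecture would therefore likely hinge either on a positive answer to the complex Plateau problem in the relevant cohomological range, or on a strengthening of cohomological CR deformation equivalence to preserve additional data --- for instance, the top-degree Kohn--Rossi group $H^{p,n-1}_\KR$, or Reeb-dynamical invariants in the spirit of Theorem~\ref{t:McL} --- providing enough rigidity to rule out genuine singularities on the $(X, 0)$ side of the chain.
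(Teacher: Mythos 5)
There is a fundamental point to flag first: the statement you are proving is Conjecture~\ref{c:CR}, and the paper does not prove it. What the paper offers after stating the conjecture is only a heuristic outline: assuming a singular germ whose link is cohomologically CR deformation equivalent to the standard sphere, apply Theorem~\ref{t:HLY} to each cohomologically rigid family to get fiberwise Stein fillings with normal isolated singularities, use Corollary~\ref{c:unique-filling} (plus Ehresmann) to pass along the chain, and end up with a holomorphic family whose general fibers are smooth and contractible while the central fiber has a normal isolated singularity --- and the authors explicitly state they do not know how to derive a contradiction from this configuration. Your proposal reproduces exactly this reduction (forward direction via Theorem~\ref{t:CR}, reverse direction via Theorems~\ref{t:HLY}, \ref{t:Yau81} and Corollary~\ref{c:unique-filling}), so up to that point you are in line with the paper's intended strategy; but neither your argument nor the paper's closes the gap, and you candidly say so. As a proof, therefore, the proposal is incomplete at precisely the step that constitutes the conjecture's actual content.

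Two more specific issues with the part you do spell out. First, the index range is off: Theorem~\ref{t:Yau81} computes $\dim H^{p,q}_{\KR}$ only for $1 \le q \le n-2$, which controls the Brieskorn invariants $b^{p,q+1}$ with $2 \le q+1 \le n-1$; so propagating constancy through the chain can at best give $b^{p,q}_0(X)=0$ for $2 \le q \le n-1$, not for $q=1$ as you claim, and cohomological rigidity (Definition~\ref{d:cohomologically CR-family}) preserves nothing beyond that range. Second, even within a single family the constancy argument needs care: Yau's formula gives the sum of Brieskorn invariants over \emph{all} singular points of the fiber $\U^i_t$, and fibers other than the one you care about may be singular, so you must first shrink the base (as the paper does) to isolate the singular fiber before you can localize the vanishing to the germ $(X,0)$. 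Finally, the concluding implication ``vanishing of these Brieskorn invariants forces smoothness'' is exactly the open complex Plateau-type problem for general (non-hypersurface) normal isolated singularities; the solved cases in \cite{Yau81,LY07,DY12} use invariants not preserved by cohomological CR deformation equivalence as defined here, so the proposal cannot be completed by citation. In short: your route is a reasonable elaboration of the paper's own sketch, but it is not a proof, and the paper contains none to compare it against.
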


Suppose, by way of contradiction, that $X \subset \C^N$ has a (non-regular) normal isolated singularity
at the origin $0$ and yet its link is cohomologically CR deformation equivalent to the standard CR sphere. 
We can assume that $X$ has  dimension $n \ge 3$. 
Let $f^i \colon \M^i \to \D$, for $i = 1,\dots,k$, 
be the sequence of cohomologically rigid CR deformation families as in Definition~\ref{d:CR-family}, 
connecting a link of $0 \in X$ to a link of a smooth point. 

By Theorem~\ref{t:HLY}, for every $i$ there exists a fiberwise filling of $\M^i$. 
That is, there exists a Stein space $\U^i$ which has $\M^i$ as part of its smooth boundary, 
and an holomophic map $g^i\colon \U^i \to \D$ such that, for every $t \in \D$, 
the fiber $\U^i_t$ is a Stein space with $\M^i_t$ as its smooth boundary. 
Moreover, each fiber $\U^i_t$ has isolated normal singularities. 
It follows by Corollary~\ref{c:unique-filling} that 
one of the fibers of $\U^1 \to \D$ is isomorphic to an open neighborhood of $0$ in $X$,
and $\U^k \to \D$ has a smooth and contractible fiber.

We trace through these families. By applying 
Ehresmann's fibration theorem in the context of families of manifolds with boundary
(when moving across a family) and
Corollary~\ref{c:unique-filling} to move from one family to the next,
we end up with a family $\U^i \to \D$ (for some index $i$) 
contains both a singular fiber, say $\U^i_0$, and a smooth and contractible one. 
After possibly shrinking the base, we can assume without loss of generality
that every fiber $\U^i_t$, for $t \ne 0$, is smooth and contractible, while the 
central fiber $\U^i_0$ has normal isolated singularities.
 
If one drops the condition that the central fiber is normal, 
then it is easy to create examples like this (e.g., see Example~\ref{eg} below),
but we do not know any example where the central fiber is normal. 
The fact that the contact structure on the boundaries of the fibers $\U^i_t$
is constant in the family should put further constraint, 
possibly leading to a contradiction and hence a proof of the conjecture. 

\begin{eg}
\label{eg}
Let $Q \subset \P^{n+1}$ be a smooth quadric. Let $x \in Q$ be a point and 
$L \subset \P^n$ a tangent line to $Q$ at $x$ that is not contained in $Q$. 
Then let $U \subset Q$ be the open set cut out by a small open ball in $\P^{n+1}$ centered at $x$. 
We regard $\P^{n+1} \times \P^1$ as embedded in a projective space $\P^N$ 
by the complete linear series of
$\O(1,1)$. Let $\p \colon \P^N \rat \P^{N-1}$ be the linear projection 
from a point $y \in (L \setminus \{x\}) \times \{0\}$, where $0 \in \P^1$ is some fixed point. 
It is easy to see that the induced map $Q \times \P^1 \to \P^{N-1}$ is an isomorphism 
onto its image away from the point $(x,0)$. 
Then let 
\[
\U = \p(U \times \C) \subset \P^{N-1},
\]
where $0 \in \C \subset \P^1$. There is a natural holomorphic map $\U \to \C$
induced by the projection $U \times \C \to \C$. 
The fiber $\U_0$ has a non-normal isolated singularity (the image of $(x,0)$)
and all other fibers $\U_t$ are smooth and contractible. 
\end{eg}

\section{An approach via contact forms}

In this last section, we propose an alternative point of view on the problem.
The idea is to look at the contact form induced on the
link from a certain 1-form on $\C^N$ and trace how it varies as we shrink the radius of the link.
We consider the 1-form
\[
\theta = \frac{\sqrt{-1}}{|z|^2} \sum_{i=1}^N (z_i d\ov{z}_i - \ov{z}_i dz_i).
\]
The reason for the normalization (the denominator $|z|^2$)
is that it makes the form invariant under the $\C^*$-action $z \mapsto \l z$.

If $X=\C^n \subset \C^N$ is a linear subspace, then for every $\ep$
the link $(L_{X,\ep}, \theta_{X,\ep})$ is strictly contactomorphic to $(S^{2n-1}, \theta_\st)$, where
$S^{2n-1}$ is the unit sphere and $\theta_\st$ is the restriction of $\theta$.
We call $\theta_\st$ the \emph{standard contact form} of $S^{2n-1}$.
If $X \subset \C^N$ is smooth but not linear, then in general
$(L_{X,\ep}, \theta_{X,\ep})$ is not strictly contactomorphic to $(S^{2n-1}, \theta_\st)$.
Nevertheless, we have the following property.

\begin{prop}
\label{p:contact}
If $X \subset \C^N$ is smooth, when, after composing with suitable
diffeomorphisms $L_{X,\ep} \simeq S^{2n-1}$, the contact forms
$\theta_{X,\ep}$ specialize to $\theta_\st$ as $\ep \to 0$.
\end{prop}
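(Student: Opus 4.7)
The plan is to exploit the $\C^*$-invariance of $\theta$ together with the smooth convergence, as $\ep\to 0$, of rescaled copies of $X$ to the tangent space $T_0 X$. Let $\m_\ep\colon \C^N\to \C^N$ denote the dilation $z\mapsto \ep z$. A direct computation shows $\m_\ep^*\theta=\theta$, since both $\sum_i (z_i\, d\ov z_i - \ov z_i\, dz_i)$ and $|z|^2$ are homogeneous of degree $2$. Hence $\m_{1/\ep}$ maps $L_{X,\ep}$ diffeomorphically onto the unit-radius link $L_{X_\ep, 1}$ of the rescaled variety $X_\ep := \m_{1/\ep}(X)$, and identifies $\theta_{X,\ep}$ with $\theta|_{L_{X_\ep,1}}$. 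It therefore suffices to construct diffeomorphisms $\psi_\ep\colon S^{2n-1}\to L_{X_\ep,1}$ whose pull-backs $\psi_\ep^*\theta$ converge to $\theta_\st$ in the $C^\infty$-topology as $\ep\to 0$; the required diffeomorphisms from $S^{2n-1}$ to $L_{X,\ep}$ are then $\Psi_\ep := \m_\ep\circ \psi_\ep$.

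The next step is to use smoothness of $X$ at $0$ to compare the family $\{L_{X_\ep,1}\}$ with the link of the tangent space $V := T_0 X$. Fix a local holomorphic parametrization $\phi\colon (U,0)\to (\C^N,0)$ of $X$, with $U\subset \C^n$ a neighborhood of the origin, and let $A := d\phi_0\colon \C^n\inj V\subset \C^N$. The rescaled maps $\phi_\ep(w) := \phi(\ep w)/\ep$, defined on $U/\ep$, satisfy $\phi_\ep(w) = A(w) + O(\ep|w|^2)$ by Taylor's theorem; an analogous estimate on higher derivatives yields $\phi_\ep\to A$ in $C^k$ on every compact subset of $\C^n$, for all $k$. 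Since $L_{X_\ep,1} = \phi_\ep\bigl(\{w\in U/\ep\colon |\phi_\ep(w)|=1\}\bigr)$, it follows that the embedded submanifolds $L_{X_\ep,1}\subset S^{2N-1}$ converge in $C^\infty$ to $V\cap S^{2N-1}$ as $\ep\to 0$.

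Finally, pick a unitary $\~B\in U(N)$ whose restriction to $\C^n\subset \C^N$ is a complex-linear isomorphism $B\colon \C^n\to V$. Since $\~B^*\theta = \theta$ by unitarity, one has $B^*\bigl(\theta|_{V\cap S^{2N-1}}\bigr) = \theta_\st$. Using a normal tubular neighborhood of $V\cap S^{2N-1}$ inside $S^{2N-1}$, the $C^\infty$-convergence above lets one realize $L_{X_\ep,1}$, for small $\ep$, as the graph of a smooth normal section on $V\cap S^{2N-1}$ that tends to zero in $C^\infty$. Composing $B$ with this graphing diffeomorphism produces $\psi_\ep$ with $\psi_\ep\to B$ in $C^\infty$, and hence $\psi_\ep^*\theta\to \theta_\st$, as required. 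The main (and rather mild) technical point is to choose the tubular-neighborhood projections compatibly in $\ep$ so that the family $\{\psi_\ep\}$ is $C^\infty$-continuous in $\ep$; this is standard given the uniform smooth convergence of the embedded links to $V\cap S^{2N-1}$.
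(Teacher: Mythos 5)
Your proof is correct and takes essentially the same approach as the paper: both use the $\C^*$-invariance of the normalized form $\theta$ to trade shrinking the radius $\ep$ for rescaling $X$ toward its tangent space $T_0X$, so that the link of radius $\ep$ becomes the unit link of the rescaled variety, which degenerates to the standard sphere. The only difference is that you make the convergence precise (via the rescaled parametrizations $\phi_\ep$ and the tubular-neighborhood graphing of $L_{X_\ep,1}$ over $T_0X \cap S^{2N-1}$, producing the diffeomorphisms $\psi_\ep$ explicitly), a point the paper leaves implicit when it writes $\lim_{\ep \to 0}(L_{X_\ep,1},\theta_{X_\ep,1}) = (S^{2n-1},\theta_\st)$.
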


\begin{proof}
Instead of shrinking $\ep$, we obtain the same effect if we keep the radius fixed
and instead deform $X$ to its tangent space at $0$. To be precise, suppose
that $X = \{f_i(z)=0\} \subset \C^N$. Using the $\C^*$-action $z \mapsto \l z$ on $\C^N$,
we define a flat family of varieties $X_\l = \{f_i(\l z)=0\}$ with $X_\l \cong X$
for all $t \ne 0$ and $X_0 = T_0(X) \cong \C^n$.
As $\theta$ is invariant under this $\C^*$-action, we have
\[
(L_{X,\ep},\theta_{X,\ep}) \simeq (L_{X_\ep,1},\theta_{X_\ep,1})
\]
for every small $\ep > 0$.
The advantage of deforming the equations of $X$ rather than shrinking the radius is that
we take the limit of the right-hand side of the above isomorphism, and get
\[
\lim_{\ep \to 0} (L_{X_\ep,1},\theta_{X_\ep,1}) =
(L_{X_0,1},\theta_{X_0,1}) = (S^{2n-1},\theta_\st).
\]
\end{proof}

We expect that this property characterizes smoothness among varieties with
a normal isolated singularity.

\begin{conj}
\label{c:contact}
Assume that $X \subset \C^N$ is a variety with
a normal isolated singularity at $0$ and that $L_{X,\ep} \simeq S^{2n-1}$ and
the contact forms $\theta_{X,\ep}$ specialize to $\theta_\st$ as $\ep \to 0$.
Then $X$ is smooth at $0$.
\end{conj}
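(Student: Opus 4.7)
The plan is to emulate the proof of Proposition~\ref{p:contact} and reduce the problem to analyzing the tangent cone of $X$ at $0$.
Writing $X = \{f_i(z) = 0\} \subset \C^N$ and setting $X_\l = \{f_i(\l z) = 0\}$, we have $X_\l \cong X$ for $\l \ne 0$ while $X_0 = C_0(X)$ is the (scheme-theoretic) tangent cone of $X$ at $0$.
The $\C^*$-invariance of $\theta$ gives $(L_{X,\ep}, \theta_{X,\ep}) \simeq (L_{X_\ep,1}, \theta_{X_\ep,1})$, so the hypothesis that $\theta_{X,\ep}$ specializes to $\theta_\st$ translates, upon passing to the limit $\ep \to 0$, into the statement that the link of the tangent cone carries a contact form which is \emph{strictly} contactomorphic to the standard form on the round sphere, namely
\[
(L_{X_0,1}, \theta_{X_0,1}) \simeq (S^{2n-1}, \theta_\st).
\]

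The next step is to extract algebraic information from this strict contactomorphism via the Reeb dynamics.
Since $X_0$ is a cone, the Hopf vector field on $S^{2N-1}$ is tangent to $L_{X_0,1}$ and, by a direct computation, agrees up to a positive constant with the Reeb vector field of $\theta_{X_0,1}$.
Consequently the Reeb flow is the free $S^1$-action coming from scaling, its orbit space is the projectivized tangent cone $\P(X_0) \subset \P^{N-1}$, and the associated principal $S^1$-bundle is the restriction of the Hopf fibration, i.e., the unit circle bundle of $\O_{\P(X_0)}(-1) := \O_{\P^{N-1}}(-1)|_{\P(X_0)}$.
A strict contactomorphism with $(S^{2n-1}, \theta_\st)$ is orientation-preserving (since $\a \wedge (d\a)^{n-1}$ is preserved) and intertwines the Reeb flows, so it descends to an orientation-preserving diffeomorphism $\P(X_0) \cong \P^{n-1}$ and identifies the principal $S^1$-bundles, which yields a topological identification of the associated complex line bundles $\O_{\P(X_0)}(-1) \cong \O_{\P^{n-1}}(-1)$.

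Comparing top self-intersections of first Chern classes then gives
\[
\deg_{\P^{N-1}} \P(X_0) = c_1(\O_{\P(X_0)}(1))^{n-1} = c_1(\O_{\P^{n-1}}(1))^{n-1} = 1,
\]
which forces $\P(X_0)$ to be a linearly embedded copy of $\P^{n-1}$ in $\P^{N-1}$.
Hence $X_0$ is a linear $n$-plane in $\C^N$, and therefore $X$ is smooth at $0$.

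The principal obstacle is to make the limiting step rigorous.
A priori, the tangent cone $X_0$ may be non-reduced, reducible, or singular along rays away from the origin, in which case $L_{X_0,1}$ is not a smooth manifold and ``strict contactomorphism'' must be interpreted in a stratified or scheme-theoretic sense.
One must argue that the hypothesis already forces $L_{X_0,1}$ to be smooth (hence $\P(X_0)$ smooth, and in particular reduced and irreducible), which in turn depends on choosing the connecting diffeomorphisms $L_{X,\ep} \simeq S^{2n-1}$ carefully so that $\theta_{X_\ep,1}$ genuinely converges to a contact form rather than to a degenerate limit.
A secondary difficulty is to ensure that the limit of the family of principal $S^1$-bundles really induces the claimed identification of Chern classes; once this is in place, the algebraic conclusion that the tangent cone is linear is essentially automatic.
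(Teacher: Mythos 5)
You should first note that the statement you are proving is Conjecture~\ref{c:contact}: the paper offers no proof of it and explicitly leaves it open, so the question is whether your sketch actually closes it, and it does not. The central gap is the limiting step, which you flag as ``the principal obstacle'' but which is in fact the entire content of the conjecture rather than a technicality. The hypothesis only says that, after composing with \emph{some} diffeomorphisms $L_{X,\ep}\simeq S^{2n-1}$, the pulled-back forms converge to $\theta_\st$ on the abstract sphere; it says nothing about the geometric limit of the submanifolds $L_{X_\ep,1}\subset S^{2N-1}$, which is the link of the tangent cone. When $X$ is singular at $0$ that limit can fail to be a manifold at all (the tangent cone may be non-reduced, reducible, or singular along rays), the chosen diffeomorphisms need not converge to anything, and there is no argument identifying the abstract limit of forms with $(L_{X_0,1},\theta_{X_0,1})$. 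Asserting a strict contactomorphism $(L_{X_0,1},\theta_{X_0,1})\simeq(S^{2n-1},\theta_\st)$ is essentially assuming what must be proved. A smaller but real issue in the same step: the family $X_\l=\{f_i(\l z)=0\}$ does not degenerate to the tangent cone as written (each equation tends to a constant); one must rescale by $\l^{-\mult_0 f_i}$, and even then the flat limit is cut out by the initial \emph{ideal}, which can be strictly smaller than the ideal generated by the initial forms of the chosen generators (harmless for hypersurfaces, not in general).

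Even granting the limit identification, the endgame extracts too little. The circle-bundle and Chern-class comparison lives on the \emph{reduced} projectivized cone, so at best you conclude that $\P(X_0)_{\red}$ is a linearly embedded $\P^{n-1}$ of degree one; this does not imply smoothness. For instance, the normal isolated hypersurface singularity $x_0^2+x_1^3+\cdots+x_n^3=0$ has scheme-theoretic tangent cone the double hyperplane $\{x_0^2=0\}$, whose reduction is a linear $\P^{n-1}$ of degree one, yet the origin is a multiplicity-two singular point. To conclude smoothness you need the tangent cone itself to be reduced (equivalently, multiplicity one, after which regularity follows since the associated graded ring is a polynomial ring), and the topological identification of principal $S^1$-bundles over the reduced cone cannot detect multiplicity; ruling out non-reduced cones is exactly the kind of additional input the contact-form hypothesis would have to supply. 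The Reeb-dynamical observations themselves are correct where they apply (for a reduced cone with smooth link the Reeb flow of $\theta_{X_0,1}$ is the Hopf circle action and the quotient bundle is $\O_{\P^{N-1}}(-1)|_{\P(X_0)}$), and your strategy is in the spirit the authors themselves suggest, namely studying the Reeb data of $\theta_{X,\ep}$ as $\ep\to0$; but as written the proposal leaves both gaps open and therefore does not prove Conjecture~\ref{c:contact}.
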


Strictly speaking, this conjecture would not 
quite provide a link theoretic characterization of smoothness. 
We believe, however, that it should be more accessible than Conjecture~\ref{c:CR}
and could be approached by studying the behavior of the Reeb orbits associated to the
contact structures $\theta_{X,\ep}$ as $\ep$ tends to $0$.

\begin{bibdiv}
\begin{biblist}

\bib{BdFFU15}{article}{
   author={Boucksom, Sebastien},
   author={de Fernex, Tommaso},
   author={Favre, Charles},
   author={Urbinati, Stefano},
   title={Valuation spaces and multiplier ideals on singular varieties},
   conference={
      title={Recent Advances in Algebraic Geometry, a conference in honor of Rob Lazarsfeld's 60th birthday},
   },
   book={
      publisher={London Math. Soc. Lecture Note Series},
   },
   date={2015},
   pages={29--51},
}

\bib{BdM75}{article}{
   author={Boutet de Monvel, Louis},
   title={Int\'egration des \'equations de Cauchy-Riemann induites
   formelles},
   language={French},
   conference={
      title={S\'eminaire Goulaouic-Lions-Schwartz 1974--1975; \'Equations
      aux deriv\'ees partielles lin\'eaires et non lin\'eaires},
   },
   book={
      publisher={Centre Math., \'Ecole Polytech., Paris},
   },
   date={1975},
   pages={Exp. No. 9, 14},
}
\bib{Br66a}{article}{
   author={Brieskorn, Egbert},
   title={Examples of singular normal complex spaces which are topological
   manifolds},
   journal={Proc. Nat. Acad. Sci. U.S.A.},
   volume={55},
   date={1966},
   pages={1395--1397},
}

\bib{Br66b}{article}{
   author={Brieskorn, Egbert},
   title={Beispiele zur Differentialtopologie von Singularit\"aten},
   language={German},
   journal={Invent. Math.},
   volume={2},
   date={1966},
   pages={1--14},
}

\bib{dFH09}{article}{
   author={de Fernex, Tommaso},
   author={Hacon, Christopher D.},
   title={Singularities on normal varieties},
   journal={Compos. Math.},
   volume={145},
   date={2009},
   number={2},
   pages={393--414},
}

\bib{DT06}{book}{
   author={Dragomir, Sorin},
   author={Tomassini, Giuseppe},
   title={Differential geometry and analysis on CR manifolds},
   series={Progress in Mathematics},
   volume={246},
   publisher={Birkh\"auser Boston, Inc., Boston, MA},
   date={2006},
}

\bib{DGY16}{article}{
   author={Du, Rong},
   author={Gao, Yun},
   author={Yau, Stephen},
   title={On higher dimensional complex Plateau problem},
   journal={Math. Z.},
   volume={282},
   date={2016},
   number={no.~1-2},
   pages={389--403},
}

\bib{DY12}{article}{
   author={Du, Rong},
   author={Yau, Stephen},
   title={Kohn-Rossi cohomology and its application to the complex Plateau
   problem, III},
   journal={J. Differential Geom.},
   volume={90},
   date={2012},
   number={no.~2},
   pages={251--266},
}

\bib{Elk81}{article}{
   author={Elkik, Ren{\'e}e},
   title={Rationalit\'e des singularit\'es canoniques},
   language={French},
   journal={Invent. Math.},
   volume={64},
   date={1981},
   number={1},
   pages={1--6},
}

\bib{FK72}{book}{
   author={Folland, Gerald},
   author={Kohn, Joseph},
   title={The Neumann problem for the Cauchy-Riemann complex},
   note={Annals of Mathematics Studies, No. 75},
   publisher={Princeton University Press, Princeton, N.J.; University of
   Tokyo Press, Tokyo},
   date={1972},
}

\bib{HL75}{article}{
   author={Harvey, Reese},
   author={Lawson, Blaine},
   title={On boundaries of complex analytic varieties. I},
   journal={Ann. of Math. (2)},
   volume={102},
   date={1975},
   number={2},
   pages={223--290},
}

\bib{Hua06}{article}{
   author={Huang, Xiaojun},
   title={Isolated complex singularities and their CR links},
   journal={Sci. China Ser. A},
   volume={49},
   date={2006},
   number={11},
   pages={1441--1450},
}

\bib{HLY06}{article}{
   author={Huang, Xiaojun},
   author={Luk, Hing-Sun},
   author={Yau, Stephen S. T.},
   title={On a CR family of compact strongly pseudoconvex CR manifolds},
   journal={J. Differential Geom.},
   volume={72},
   date={2006},
   number={3},
   pages={353--379},
}

\bib{Ish13}{article}{
   author={Ishii, Shihoko},
   title={Mather discrepancy and the arc spaces},
   language={English, with English and French summaries},
   journal={Ann. Inst. Fourier (Grenoble)},
   volume={63},
   date={2013},
   number={1},
   pages={89--111},
}

\bib{KR65}{article}{
   author={Kohn, Joseph},
   author={Rossi, Hugo},
   title={On the extension of holomorphic functions from the boundary of a
   complex manifold},
   journal={Ann. of Math. (2)},
   volume={81},
   date={1965},
   pages={451--472},
}

\bib{Loo84}{book}{
   author={Looijenga, Eduard},
   title={Isolated singular points on complete intersections},
   series={London Mathematical Society Lecture Note Series},
   volume={77},
   publisher={Cambridge University Press, Cambridge},
   date={1984},
}

\bib{LY07}{article}{
   author={Luk, Hing Sun},
   author={Yau, Stephen},
   title={Kohn-Rossi cohomology and its application to the complex Plateau
   problem. II},
   journal={J. Differential Geom.},
   volume={77},
   date={2007},
   number={no.~1},
   pages={135--148},
}

\bib{Mat12}{article}{
   author={Mather, John},
   title={Notes on topological stability},
   journal={Bull. Amer. Math. Soc. (N.S.)},
   volume={49},
   date={2012},
   number={4},
   pages={475--506},
}

\bib{McL16}{article}{
   author={McLean, Mark},
   title={Reeb orbits and the minimal discrepancy of an isolated singularity},
   journal={Invent. Math.},
   number={204},
   date={2016},
   pages={505--594},
}

\bib{Mil68}{book}{
   author={Milnor, John},
   title={Singular points of complex hypersurfaces},
   series={Annals of Mathematics Studies, No. 61},
   publisher={Princeton University Press, Princeton, N.J.; University of
   Tokyo Press, Tokyo},
   date={1968},
}

\bib{Mum61}{article}{
   author={Mumford, David},
   title={The topology of normal singularities of an algebraic surface and a
   criterion for simplicity},
   journal={Inst. Hautes \'Etudes Sci. Publ. Math.},
   number={9},
   date={1961},
   pages={5--22},
}

\bib{Nob75}{article}{
   author={Nobile, Augusto},
   title={Some properties of the Nash blowing-up},
   journal={Pacific J. Math.},
   volume={60},
   date={1975},
   number={1},
   pages={297--305},
}

\bib{OZ91}{article}{
   author={Oneto, Anna},
   author={Zatini, Elsa},
   title={Remarks on Nash blowing-up},
   note={Commutative algebra and algebraic geometry, II (Italian) (Turin,
   1990)},
   journal={Rend. Sem. Mat. Univ. Politec. Torino},
   volume={49},
   date={1991},
   number={1},
   pages={71--82 (1993)},
}

\bib{RS93}{article}{
   author={Robbin, Joel},
   author={Salamon, Dietmar},
   title={The Maslov index for paths},
   journal={Topology},
   volume={32},
   date={1993},
   number={4},
   pages={827--844},
}

\bib{Ros64}{article}{
   author={Rossi, Hugo},
   title={Attaching analytic spaces to an analytic space along a
   pseudoconcave boundary},
   conference={
      title={Proc. Conf. Complex Analysis},
      address={Minneapolis},
      date={1964},
   },
   book={
      publisher={Springer, Berlin},
   },
   date={1965},
   pages={242--256},
}

\bib{RT77}{article}{
   author={Rossi, Hugo},
   author={Taylor, Joseph},
   title={On algebras of holomorphic functions on finite pseudoconvex
   manifolds},
   journal={J. Functional Analysis},
   volume={24},
   date={1977},
   number={1},
   pages={11--31},
}

\bib{Sch86}{article}{
   author={Scherk, John},
   title={CR structures on the link of an isolated singular point},
   conference={
      title={Proceedings of the 1984 Vancouver conference in algebraic
      geometry},
   },
   book={
      series={CMS Conf. Proc.},
      volume={6},
      publisher={Amer. Math. Soc., Providence, RI},
   },
   date={1986},
   pages={397--403},
}

\bib{Sem54}{article}{
   author={Semple, John},
   title={Some investigations in the geometry of curve and surface elements},
   journal={Proc. London Math. Soc. (3)},
   volume={4},
   date={1954},
   pages={24--49},
}

\bib{Sev57}{article}{
   author={Severi, Francesco},
   title={Lezioni sulle funzioni analitiche di pi\`u variabili complesse.
   Tenute nel 1956-57 all'Istituto Nazionale di Alta Matematica in Roma},
   language={Italian},
   book={publisher={Cedam-Casa Editrice},},
   volume={XIV},
   date={1957},
}

\bib{Sho02}{article}{
   author={Shokurov, Vyacheslav},
   title={Letters of a bi-rationalist. IV. Geometry of log flips},
   conference={
      title={Algebraic geometry},
   },
   book={
      publisher={de Gruyter, Berlin},
   },
   date={2002},
   pages={313--328},
}

\bib{Tan75}{book}{
   author={Tanaka, Noboru},
   title={A differential geometric study on strongly pseudo-convex
   manifolds},
   note={Lectures in Mathematics, Department of Mathematics, Kyoto
   University, No. 9},
   publisher={Kinokuniya Book-Store Co., Ltd., Tokyo},
   date={1975},
}


\bib{Var80}{article}{
   author={Var{\v{c}}enko, Alexander},
   title={Contact structures and isolated singularities},
   language={Russian, with English summary},
   journal={Vestnik Moskov. Univ. Ser. I Mat. Mekh.},
   date={1980},
   number={2},
   pages={18--21, 101},
}

\bib{Yau81}{article}{
   author={Yau, Stephen},
   title={Kohn--Rossi cohomology and its application to the complex Plateau
   problem. I},
   journal={Ann. of Math. (2)},
   volume={113},
   date={1981},
   number={1},
   pages={67--110},
}

\bib{Yau11}{article}{
   author={Yau, Stephen},
   title={Rigidity of CR morphisms between compact strongly pseudoconvex CR
   manifolds},
   journal={J. Eur. Math. Soc. (JEMS)},
   volume={13},
   date={2011},
   number={1},
   pages={175--184},
}

\end{biblist}
\end{bibdiv}

\end{document}